\newtheorem{prop}{Proposition}[section]
\newtheorem{lemma}[prop]{Lemma}
\newtheorem{theo}[prop]{Theorem}
\newtheorem{coro}[prop]{Corollary}
\numberwithin{equation}{section}
\theoremstyle{remark}
\newtheorem{rmq}{Remark}
\newcommand{\mn}{\mathrm{n}}
\newcommand{\di}{\displaystyle}
\newcommand{\n}{\nabla}
\newcommand{\ul}{\underline}
\newcommand{\R}{\mathbb{R}}
\newcommand{\N}{\mathbb{N}}
\newcommand{\Q}{\mathbb{Q}}
\newcommand{\MP}{\mathbb{P}}
\newcommand{\tr}{\widetilde{r}}
\newcommand{\tz}{\widetilde{z}}
\newcommand{\tu}{\widetilde{u}}
\newcommand{\tw}{\widetilde{w}}
\newcommand{\ta}{\widetilde{a}}
\title{On the zero capillarity limit for the Euler-Korteweg system}
\author{Corentin Audiard
\footnote{Sorbonne Universit\'e, Universit\'e Paris Cit\'e, CNRS, Laboratoire Jacques-Louis Lions, LJLL, F-75005 Paris, France}, Marc-Antoine Vassenet\footnote{Universit\'e Paris Dauphine, PSL Research University, Ceremade, Umr Cnrs 7534, Place du
Mar\'echal De Lattre De Tassigny, 75775 Paris cedex 16, France}}
\begin{document}
\maketitle
\begin{abstract}
We study the Euler-Korteweg equations with a weak capillarity tensor. It formally converges 
to the Euler equations in the zero capillarity limit. Our aim is two-fold : first we prove 
rigorously this limit in $\R^d$, $d\geq 1$, and obtain
a more precise BKW expansion of the solution, second we initiate the study of the problem on the half space. In this 
case we obtain a priori estimates for the solutions that degenerate as the capillary coefficient 
converges to zero, and we explain
this degeneracy with the construction of a (formal) BKW expansion that exhibits boundary layers.\\
The results on the full space extend and improve a classical result of Grenier (1998) on the 
semi-classical limit of nonlinear Schr\"odinger equations.\\
The analysis on the half space is restricted to the case of quantum fluids with
irrotational velocity.
\end{abstract}
\renewcommand{\abstractname}{R\'esum\'e}
\begin{abstract}
\end{abstract}
\section{Introduction}
The Euler-Korteweg system is a modification of the compressible Euler equations that adds a capillary tensor 
in the momentum equation
\begin{equation}\label{EK}
 \left\{
 \begin{array}{ll}
  \partial_t\rho_\varepsilon+\text{div}(\rho_\varepsilon u_\varepsilon)=0,\\
  \partial_t u_\varepsilon+u_\varepsilon\cdot \nabla u_\varepsilon
  +\nabla g(\rho_\varepsilon)=\varepsilon^2
  \nabla\left(K(\rho_\varepsilon)\Delta\rho_\varepsilon+\frac{1}{2}K'(\rho_\varepsilon)
  |\nabla \rho_\varepsilon|^2\right),\\
  (\rho,u)|_{t=0}=(\rho_0,u_0).
 \end{array}
\right.\ (x,t)\in \Omega\times [0,T]
\end{equation}
The term $\varepsilon^2K$ is the capillary coefficient. We are interested in the study 
of the limit $\varepsilon\to 0$, where we recover formally the usual Euler equations. 
\begin{equation}\label{Euler}
 \left\{
 \begin{array}{ll}
  \partial_t\rho+\text{div}(\rho u)=0,\\
  \partial_t u+u\cdot \nabla u+\nabla g(\rho)=0,\\
  (\rho,u)|_{t=0}=(\rho_0,u_0).
 \end{array}
\right.\ (x,t)\in \Omega\times [0,T]
\end{equation}
We consider solutions of the form $\rho=\rho_\infty+r$, with $(r,u)\in C([0,T],H^{n+1}\times H^n)$, $n$ large, 
$\rho_\infty$ is a constant such that $g'(\rho_\infty)>0$, and 
$u\in C([0,T],H^n)$. Their (local) existence for fixed $\varepsilon$ is known since the work of
Benzoni, Danchin and Descombes \cite{Benzoni1}.
\\
Given $(\rho_0,u_0)$ smooth, we study the convergence of smooth solutions of \eqref{EK}
to the solution of \eqref{Euler}. We consider two geometric settings : $\Omega=\R^d$ and $\Omega
=\R^{d-1}\times \R^{+*}:=\R^d_+$. Precise statements are given later, but our results for  
these two cases are significantly different and can be broadly summarized as follows : 
\begin{enumerate}
 \item In the full space case, we  prove the existence of a time interval independent of 
 $\varepsilon$ on which the solutions of \eqref{EK} converge to the solution of the Euler 
 equations (``approximate solution''), with explicit rate of convergence. Moreover, thanks to 
 BKW analysis, we obtain a higher order expansion of the approximate solution with arbitrarily 
 high order of convergence.
 \item In the half space case, with boundary condition $\rho|_{x_d=0}=1,\ u_d|_{x_d=0}=0$, 
 we obtain a priori estimates of the solution that degenerate as $\varepsilon\to 0$. 
 This feature is explained by the construction of an approximate solution 
 which features terms varying rapidly in a ``boundary layer'' of size $\varepsilon$ near $x_d=0$, this 
 explains the divergence of the higher order $H^n$ 
 norms of the solution as  $\varepsilon\to 0$.
\end{enumerate}
A discussion on other choices of boundary conditions that can be found in the litterature, 
and the associated BKW expansion, is provided at the end of the article, section \ref{discussion}.
\paragraph{Link with the Schr\"odinger equation}
There is an abundant litterature on the analysis of perturbations of hyperbolic problems, the problem 
studied here has most striking similarities with the  semi-classical limit for the nonlinear 
Schr\"odinger equation 
\begin{equation}\label{schrodinger}
i\varepsilon\partial_t\psi_\varepsilon+\frac{\varepsilon^2}{2}\Delta \psi_\varepsilon=g(|\psi_\varepsilon|^2)\psi.
\end{equation}
Indeed the Madelung transform $\psi_\varepsilon=\sqrt{\rho_\varepsilon}
e^{i\varphi_\varepsilon/\varepsilon}$ allows to formally reformulate \eqref{schrodinger} as
the so-called quantum Euler system
\begin{equation}\label{qEuler}
 \left\{
 \begin{array}{ll}
  \partial_t\rho_\varepsilon+\text{div}(\rho_\varepsilon u_\varepsilon)=0,\\
  \di \partial_t u_\varepsilon+u_\varepsilon\cdot \nabla u_\varepsilon
  +\nabla g(\rho_\varepsilon)=\frac{\varepsilon^2}{4}
  \nabla\left(\frac{\Delta\rho_\varepsilon}{\rho_\varepsilon}-\frac{|\nabla \rho_\varepsilon|^2}
  {2\rho_\varepsilon^2}\right),
 \end{array}
 \right.
\end{equation}
 we recognize \eqref{EK} with $K(\rho)=1/(4\rho)$.
 \subparagraph{The Schr\"odinger equation on the full space}
 The rigorous analysis of the semi-classical limit for \eqref{schrodinger} was initiated by G\'erard 
 \cite{Gersemiclassic}, who proved the convergence to the Euler system in periodic, analytic 
 settings. This was later extended to the Sobolev framework by Grenier \cite{Grenier} thanks to a change of variable (different from the Madelung transform) which allowed to 
 reformulate  \eqref{schrodinger} as a symmetrizable hyperbolic system with a dispersive perturbation 
 \emph{which commutes with the symmetrizer}. His main result is\footnote{The exact statement in \cite{Grenier} is slightly different,  for the convenience of the reader we rephrase it in a way which is simpler for comparison in our settings.} :
\begin{theo}[Grenier '98]
  Let $\psi_\varepsilon$ solution of \eqref{schrodinger} with, for some $J\in \N$, 
  $\psi_\varepsilon|_{t=0}=
  a_0(x,\varepsilon)e^{i\varphi_0(x,\varepsilon)/\varepsilon)}$, $a_0=\sum_0^J \varepsilon^ja^j_0(x)+\varepsilon^Jr^J_\varepsilon(x)$ , $\varphi_0=\sum_{j=0}^J\varepsilon^j\varphi_0^j+\varepsilon^J\delta^J_\varepsilon$. Assume 
  $$
f'>0,\ \lim_{\varepsilon\to 0}  \|(r^J,\delta^J)\|_{H^s(\R^d)}=0\text{ for some }s>2N+2+d/2.
  $$
  Then there exists $T>0$ such that $\psi_\varepsilon$ has the form 
  $\psi_\varepsilon=a_\varepsilon e^{i\varphi_\varepsilon/\varepsilon}$ on $[0,T]\times \R^d$, 
  and there exists functions $a:=\sum_{j=0}^J\varepsilon^j a^j$ complex valued,$\ 
  \varphi=\sum_0^J\varepsilon^j\varphi^j$ defined on $[0,T]\times \R^d$ 
  given by the BKW method such that
  $$
\left\|(a_\varepsilon-a,\varphi_\varepsilon-\varphi)\right\|  _{L^\infty([0,T],H^{s-2J-2-d/2})}=o(\varepsilon^J).
$$
  \end{theo}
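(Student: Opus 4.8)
The plan is to follow Grenier's strategy rather than the Madelung transform (which produces the singular quantum-pressure terms of \eqref{qEuler}): seek the solution in the polar form $\psi_\varepsilon=a_\varepsilon e^{i\varphi_\varepsilon/\varepsilon}$ with $a_\varepsilon$ allowed to be \emph{complex}. Substituting into \eqref{schrodinger} and collecting the terms carrying no explicit factor $\varepsilon$ into an eikonal equation for $\varphi_\varepsilon$, one is led to the system
\[
\left\{
\begin{array}{l}
\partial_t a_\varepsilon+\nabla\varphi_\varepsilon\cdot\nabla a_\varepsilon+\tfrac12 a_\varepsilon\Delta\varphi_\varepsilon=\tfrac{i\varepsilon}{2}\Delta a_\varepsilon,\\
\partial_t\varphi_\varepsilon+\tfrac12|\nabla\varphi_\varepsilon|^2+g(|a_\varepsilon|^2)=0.
\end{array}
\right.
\]
Writing $u_\varepsilon=\nabla\varphi_\varepsilon$ (irrotationality is preserved by the flow) and taking the gradient of the second line yields a closed system for $(a_\varepsilon,u_\varepsilon)$ whose first-order part is quasilinear and whose only $\varepsilon$-term is the skew-adjoint dispersive operator $\tfrac{i\varepsilon}2\Delta$ acting on the amplitude alone. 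The first step is therefore to record this reformulation precisely, and to check that $(a_\varepsilon,u_\varepsilon)$ determines $\psi_\varepsilon$, so that it suffices to solve the $(a,u)$ system and then integrate $u_\varepsilon$ back to $\varphi_\varepsilon$ through the Hamilton--Jacobi equation.

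The heart of the argument is a family of $H^s$ energy estimates for this system that are \emph{uniform} in $\varepsilon$. Splitting $a_\varepsilon=a_1+ia_2$ and setting $V=(a_1,a_2,u_\varepsilon)$, the first-order part $\partial_tV+\sum_j A_j(V)\partial_j V$ is Friedrichs-symmetrizable; the key point is to choose the symmetrizer so that it acts as the \emph{identity} on the $(a_1,a_2)$-block, namely $S(V)=\mathrm{diag}\big(1,1,\tfrac{1}{4g'(|a_\varepsilon|^2)}I_d\big)$, which is symmetric positive definite precisely because $g'>0$ and which renders each $SA_j$ symmetric. With this choice the dispersive term, living on the amplitude block where $S$ is constant, contributes for every multi-index $\alpha$
\[
2\,\mathrm{Re}\!\int \tfrac{i\varepsilon}{2}\,\Delta\partial^\alpha a_\varepsilon\,\overline{\partial^\alpha a_\varepsilon}\,dx=\varepsilon\,\mathrm{Re}\Big(i\!\int\!\Delta\partial^\alpha a_\varepsilon\,\overline{\partial^\alpha a_\varepsilon}\,dx\Big)=\varepsilon\,\mathrm{Re}\big(-i\,\|\nabla\partial^\alpha a_\varepsilon\|_{L^2}^2\big)=0;
\]
that is, the dispersion is \emph{exactly} energy-neutral, producing neither a commutator nor any loss of derivative. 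This is the concrete meaning of the perturbation ``commuting with the symmetrizer''. The remaining contributions are the standard symmetric-hyperbolic ones, controlled by $\|V\|_{W^{1,\infty}}$ via Moser and commutator estimates, and a Gronwall argument then produces a time $T>0$ and a bound $\|V_\varepsilon(t)\|_{H^s}\le C$ on $[0,T]$, both independent of $\varepsilon$, giving the announced form $\psi_\varepsilon=a_\varepsilon e^{i\varphi_\varepsilon/\varepsilon}$ on a common interval.

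Next I would construct the approximate solution by the BKW method. Inserting $a=\sum_{j=0}^J\varepsilon^j a^j$ and $\varphi=\sum_{j=0}^J\varepsilon^j\varphi^j$ into the $(a,\varphi)$ system and identifying powers of $\varepsilon$ gives a hierarchy: at order $\varepsilon^0$ the pair $(a^0,\varphi^0)$ solves the dispersionless system, which after setting $\rho=|a^0|^2$ is equivalent to the Euler system \eqref{Euler}; at each order $\varepsilon^j$, $j\ge1$, the profile $(a^j,\varphi^j)$ solves a \emph{linear} transport system obtained by linearizing the first-order operator around $(a^0,\varphi^0)$, with a source depending on the already-constructed lower-order profiles and, crucially, on $\tfrac{i}2\Delta a^{j-1}$. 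Each such linear problem is well posed on the same interval $[0,T]$, and the Laplacian in the source is responsible for a loss of two derivatives per order, which accounts for the $2J$ in the threshold $s>2J+2+d/2$.

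Finally I would estimate the error. The truncated sums $a_{\mathrm{app}}=\sum_0^J\varepsilon^ja^j$ and $\varphi_{\mathrm{app}}=\sum_0^J\varepsilon^j\varphi^j$ solve the $(a,\varphi)$ system up to a residual of size $O(\varepsilon^{J+1})$ in the relevant norm; writing $(a_\varepsilon,\varphi_\varepsilon)=(a_{\mathrm{app}},\varphi_{\mathrm{app}})+(\text{error})$ and inserting into the system, the error solves a linear system of exactly the same symmetric-hyperbolic-plus-skew-dispersive type with small forcing. Re-running the uniform energy estimate of the second step (the dispersive contribution again vanishing) and applying Gronwall bounds the error by $o(\varepsilon^J)$ in $L^\infty([0,T],H^{s-2J-2-d/2})$, the exponent reflecting the two-derivative loss of the last hierarchy step together with a Sobolev embedding; reconstructing $\varphi_\varepsilon$ from $u_\varepsilon$ then yields the stated convergence of $(a_\varepsilon-a,\varphi_\varepsilon-\varphi)$. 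I expect the main obstacle to be the second step: pinning down the symmetrizer that leaves the amplitude block unweighted, so that the skew-adjoint dispersion is \emph{exactly} energy-neutral, and thereby closing $\varepsilon$-uniform $H^s$ estimates with no loss of derivative. Once that structural point is secured, the BKW hierarchy and the error analysis are, though lengthy, essentially routine linear and quasilinear energy estimates.
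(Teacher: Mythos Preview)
The paper does not give its own proof of this statement: the theorem is quoted from Grenier \cite{Grenier} as background, with only the one-sentence summary that Grenier ``allowed to reformulate \eqref{schrodinger} as a symmetrizable hyperbolic system with a dispersive perturbation \emph{which commutes with the symmetrizer}''. Your proposal is a faithful and correct fleshing-out of exactly that strategy --- the complex-amplitude ansatz, the symmetrizer $S=\mathrm{diag}(1,1,\tfrac{1}{4g'(|a|^2)}I_d)$ that is the identity on the amplitude block so that $\tfrac{i\varepsilon}{2}\Delta$ is energy-neutral, the resulting $\varepsilon$-uniform $H^s$ bounds, the BKW hierarchy with its two-derivative loss per step, and the linear error estimate --- so there is nothing to compare: what you wrote \emph{is} Grenier's proof, and it matches the paper's description.
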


This fundamental result received several extensions : addition of a subquadratic potential
(Carles \cite{carlespotential}), solutions that do not cancel at infinity
(Alazard and Carles \cite{alazardcarlesGP}), a degenerate nonlinearity with $f'(0)=0$ (Alazard-Carles
\cite{alazardCarlessupercrit} with some technical limitations on the regularity, later lifted by 
Chiron and Rousset \cite{chironrousset}).
In all the results mentioned, the fluid formulation \eqref{qEuler}
is never used for the proof of convergence. Rather the authors work either directly on the Schr\"odinger
equation, or on the equations satisfied by $a_\varepsilon,\varphi_\varepsilon$, where 
$a_\varepsilon$ is \emph{complex valued}. This is a key feature since it allows to work 
on equations that have a better structure (less nonlinear, more skew-symmetric).
\subparagraph{The Schr\"odinger equation on a domain} This case is significantly more involved since the boundary conditions 
of \eqref{schrodinger} are in general not compatible with those of the limit system \eqref{Euler}. 
The construction of approximate solution through a BKW expansion then requires to add corrector 
terms that are rapidly varying, see section \ref{defbkw} for details.
When the spatial domain is the exterior of a smooth compact set in dimension $2$, Lin and Zhang \cite{linzhang} proved the convergence of the fluid variables of \eqref{schrodinger} with $g(\rho)=\rho-1$ (Gross-Pitaevskii equation), $\partial_n\psi_\varepsilon|_{\partial\Omega}=0$ (Neumann boundary condition) to the solution of the Euler equation \eqref{Euler}: 
$$
(\rho_\varepsilon,u_\varepsilon):=(
|\psi_\varepsilon|^2,\varepsilon\text{Im}(\overline{\psi_\varepsilon}\n\psi_\varepsilon))\to (\rho,\rho u),
\text{ in }L^\infty([0,T],L^2\times L^1_{\text{loc}}).$$ 
The proof is fundamentally different from the argument of Grenier as it merely uses 
a modulated energy 
$$
H_\varepsilon (\psi_\varepsilon,\rho,u)=\int_{\Omega}|(\varepsilon\n-iu)\psi_\varepsilon|^2
+(|\psi_\varepsilon|^2-\rho)^2dx,
$$
it does not extend to higher order of convergence or smoother 
functional settings. \\
When the domain is the half space $\R^{2}\times \R^{+*}$, arbitrarily precise approximate solutions 
$(\rho_{\text{app}},\varphi_{\text{app}})$ and high order of convergence were obtained by Chiron 
and Rousset \cite{chironrousset} thanks to difficult energy estimates on the error
$\di e^{-i\varphi_{\text{app}}/\varepsilon}\psi_\varepsilon-\sqrt{\rho_{\text{app}}}$. In particular, 
the skew symmetric nature of the linearized operator
$$
i\frac{\varepsilon}{2}\Delta +u_{\text{app}}\cdot\n+\frac{\text{div}(u_{\text{app}})}{2}
$$
played a key role.\\
For Dirichlet boundary conditions $\psi_\varepsilon|_{x_d=0}=0$, the analysis is even more 
difficult as the amplitude of the boundary layer terms is $O(1)$ (instead of $O(\varepsilon)$ for Neumann
conditions), Gui and Zhang managed to push further the analysis from \cite{chironrousset} to 
obtain results similar to the Neumann case, with the restriction that data are small (but with 
smallness independent of $\varepsilon$).
\paragraph{The general case of the Euler-Korteweg system} There are several reasons to study the 
Euler-Korteweg system. It includes the physically relevant Schr\"odinger equation, but more 
importantly it has also been widely considered with other capillarity coefficients. For 
example, in the framework of weak solutions, with techics similar to the modulated energy estimates, 
Bresch, Gisclon, and Lacroix-Violet \cite{BrGLV} studied the case $K(\rho)$ proportional to 
$\rho^s$, $s\in \R$, Giesselmann, Lattanzio and Tzavaras \cite{GLT} considered constant and 
general positive capillarity coefficient $K$. A conditional convergence result of weak solutions 
of the Euler-Korteweg 
system \eqref{EK} to the Euler equations \eqref{Euler} was deduced from these methods by Giesselmann 
and Tzavaras \cite{GiTz}, under ad hoc regularity assumptions on the solutions, and special 
algebraic relations for  $K$ and $g$.  To our knowledge the existence of global weak solutions
for the Euler-Korteweg system with general capillarity is still an open problem.
Other relevant capillary coefficients such as $1+\kappa/(\rho-1),\ \kappa>0$ appear in the 
framework of quasi-linear Schr\"odinger equations (e.g. \cite{KIVSHAR}).\\
When the spatial domain is an open set different from $\R^d$, another motivation to consider the fluid 
formulation of the Schr\"odinger equation is the study of the boundary value 
problem where one prescribes on the boundary the physical quantities $\rho|_{\partial\Omega}$
and $u\cdot\mathrm{n}|_{\partial\Omega}$. This is considered as the physically relevant boundary 
conditions for quantum fluids, see \cite{BNP} section $2.1$. Indeed since we have 
$\rho=|\psi|^2, u=\text{Im}(\overline{\psi}\n \psi)/|\psi|^2$, the boundary conditions on the original Schr\"odinger variable are 
highly nonlinear, and make the analysis quite difficult. The analysis on the half space is 
the subject of section \ref{sechalfspace}.\\
Finally, another important point is that the analysis of the semi-classical 
limit for the Schr\"odinger equation is restricted to the case of irrotational velocity fields : 
$\text{curl}(u)=0$. This limitation is lifted here by working directly on the fluid formulation,
to the price of more technical energy estimates.
\\
It should be noted that the convergence of solutions of \eqref{EK} to solutions of other 
models (Burgers, KdV, Kadomtsev-Petviashvili) in the long wave 
regimes was studied by Benzoni and Chiron \cite{BenChir}, the analysis relied notably on an improvement of the energy estimates introduced in \cite{Benzoni1}, quite similar to 
proposition \ref{propenergie}. 
\subparagraph{Main results}
When the spatial domain is $\R^d$, we obtain the existence of arbritrarily precise approximate 
solution, and their convergence as $\varepsilon\to 0$ to the exact solution : 
\begin{prop}[Existence of an approximate solution]\label{existapprox1}
Let $\Omega=\R^d$, $N\in \N$, $n>d/2+3+[3N/2]$, $[\cdot]$ the integer part, 
$\rho_\infty>0$, data $(r_0^k,u_0^k)\in H^{n_k}$, $0\leq k\leq N$, 
with $n_k=n-[3k/2]$, $0\leq k\leq N$, and such that $g'\circ(\rho_\infty+r^0_0)\geq c>0$. \\
There exists $\varepsilon_0>0$ and $T>0$ such that for $0<\varepsilon
<\varepsilon_0$, there exists an approximate solution \\
$(\rho^{\text{app}}-\rho_\infty,u^{\text{app}}):=\sum_0^N \varepsilon^k(r^k,u^k)$, $(r_k,u_k)$ given by the 
BKW expansion, solution of
\begin{equation*}
\left\{
\begin{array}{ll}
 \partial_t\rho+\text{div}(\rho u)=e_1,\\
 \partial_tu+u\cdot \nabla u+\nabla g=
 \varepsilon^2\nabla (K\Delta \rho+\frac{1}{2}K'|\nabla \rho|^2)+e_2,
 \\
 (\rho-\rho_\infty,u)|_{t=0}=\sum_{k=0}^N\varepsilon^k(r_0^k,u_0^k),
\end{array}
\right.
\end{equation*}
with 
\begin{eqnarray}
\|(e_1,e_2)\|_{\di C_T(H^{n_N-1}\times H^{n_N-3})}=O(\varepsilon^{N+1}),\\
\inf_{(x,t)\in \R^d\times [0,T]}\min(g'(\rho^{\text{app}}),\rho^{\text{app}}))\geq \alpha/2.
\end{eqnarray}
\end{prop}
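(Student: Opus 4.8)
The plan is to carry out a standard BKW (WKB) construction: substitute the ansatz $\rho^{\text{app}}=\rho_\infty+\sum_{k=0}^N\varepsilon^k r^k$, $u^{\text{app}}=\sum_{k=0}^N\varepsilon^k u^k$ into \eqref{EK}, Taylor expand the nonlinearities $g,K,K'$ around $\rho_\infty$, and identify the coefficient of each power $\varepsilon^k$. Since the capillary term carries the prefactor $\varepsilon^2$, the dispersive contribution $\varepsilon^2\nabla(K\Delta\rho+\tfrac12 K'|\nabla\rho|^2)$ only feeds the equations at orders $k\geq 2$; consequently the transport/pressure part stays at the principal level for every $k$. Requiring the coefficients of $\varepsilon^0,\dots,\varepsilon^N$ to vanish produces a triangular cascade: at order $\varepsilon^0$ the pair $(r^0,u^0)$ solves the full nonlinear compressible Euler system \eqref{Euler}, while for $1\leq k\leq N$ the pair $(r^k,u^k)$ solves a \emph{linear} system whose principal part is the Euler system linearized around $(r^0,u^0)$ and whose source $(F_1^k,F_2^k)$ depends only on the previously constructed profiles $(r^j,u^j)_{j<k}$ (through products and, for the momentum equation, through the capillary term applied to $r^{k-2}$).

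First I would solve the leading order. Under the hypothesis $g'\circ(\rho_\infty+r^0_0)\geq c>0$ and $\rho_\infty>0$, the compressible Euler system is symmetrizable hyperbolic, so the classical theory provides $T>0$ and a solution $(r^0,u^0)\in C([0,T],H^n)$ with $\min(g'(\rho_\infty+r^0),\rho_\infty+r^0)\geq \alpha$ on $[0,T]$ for some $\alpha>0$, after possibly shrinking $T$. Crucially, all profiles are \emph{independent of $\varepsilon$}, so this single $T$ will serve for the whole construction. I would then treat the higher orders by induction on $k$: each system for $(r^k,u^k)$ is linear with coefficients frozen at $(r^0,u^0)\in H^n$, admits the same Friedrichs symmetrizer as the leading system, and is therefore well-posed on the \emph{same} interval $[0,T]$ by the energy estimates for symmetric hyperbolic systems (cf. \cite{Benzoni1}); its solution is controlled by the norms of $(r^j,u^j)_{j<k}$ and of the data $(r^k_0,u^k_0)$.

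The delicate point, and the one I expect to be the main obstacle, is the regularity bookkeeping that justifies the loss $n_k=n-[3k/2]$ and fixes the final error spaces. In the momentum source $F_2^k$ the most derivative-costly contribution is $\nabla\big(K(\rho_\infty+r^0)\Delta r^{k-2}\big)$, which applies three derivatives to $r^{k-2}$. Since $[3(k-2)/2]=[3k/2]-3$, one has $n_{k-2}-3=n_k$, so this term lands \emph{exactly} in $H^{n_k}$, and the linear hyperbolic estimate then yields $(r^k,u^k)\in C([0,T],H^{n_k})$ with no further loss; the continuity source $F_1^k$, which costs only one derivative of strictly lower profiles, is more regular and imposes no stronger constraint. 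The hypothesis $n>d/2+3+[3N/2]$ guarantees $n_N-3>d/2$, hence $H^{n_N-3}\hookrightarrow L^\infty$, which is precisely what is needed to close the products and control the capillary nonlinearity at the last step.

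Finally I would assemble $(\rho^{\text{app}},u^{\text{app}})$ and read off the residual. By construction every power $\varepsilon^0,\dots,\varepsilon^N$ has been cancelled, so $(e_1,e_2)$ is a finite sum of monomials $\varepsilon^k(\cdots)$ with $k\geq N+1$, each coefficient being a fixed (in $\varepsilon$) product of profiles and their derivatives. The slowest-decaying contribution comes from the transport and pressure terms at order $\varepsilon^{N+1}$, which carry only one derivative of the profiles and hence lie in $H^{n_N-1}$; the capillary term contributes at order $\varepsilon^{N+2}$ and above and applies three derivatives to $r^N\in H^{n_N}$, which is what forces the coarser space $H^{n_N-3}$ for $e_2$. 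Altogether this gives $\|(e_1,e_2)\|_{C_T(H^{n_N-1}\times H^{n_N-3})}=O(\varepsilon^{N+1})$. For the lower bound, writing $\rho^{\text{app}}=\rho_\infty+r^0+\sum_{k\geq 1}\varepsilon^k r^k$ and using $H^{n_N-3}\hookrightarrow L^\infty$, the corrections $\sum_{k\geq 1}\varepsilon^k r^k$ are $O(\varepsilon)$ in $L^\infty([0,T]\times\R^d)$; choosing $\varepsilon_0$ small enough that these stay below $\alpha/2$ preserves $\min(g'(\rho^{\text{app}}),\rho^{\text{app}})\geq\alpha/2$, inherited from the leading profile, and completes the proof.
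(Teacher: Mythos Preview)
Your proposal is correct and follows essentially the same route as the paper (which proves the statement as Corollary~\ref{existapprox} via Theorem~\ref{lwphyp}): BKW cascade with the nonlinear Euler system at order zero, the linearized Euler system with forcing at orders $k\geq 1$ solved by symmetrizable-hyperbolic theory on the same $[0,T]$, the regularity count $n_k=n-[3k/2]$ dictated by the third-order capillary forcing $\nabla(K(\rho^0)\Delta r^{k-2})$, and the smallness of $\varepsilon_0$ to secure the lower bound on $\rho^{\text{app}}$ and $g'(\rho^{\text{app}})$. One minor slip: the capillary term \emph{does} contribute to $e_2$ already at order $\varepsilon^{N+1}$, through $\nabla\Delta r^{N-1}\in H^{n_{N-1}-3}\subset H^{n_N-3}$, not only from $\varepsilon^{N+2}$ onward; this does not affect your conclusion since the target space for $e_2$ is $H^{n_N-3}$ anyway.
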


\begin{theo}[Convergence of the approximate solution]\label{mainthRd}
Consider $(\rho_\varepsilon,u_\varepsilon)$ solution of the Euler-Korteweg system 
\begin{equation*}
\left\{
\begin{array}{ll}
 \partial_t\rho_\varepsilon+\text{div}(\rho_\varepsilon u_\varepsilon)=0,\\
 \partial_tu_\varepsilon+u_\varepsilon\cdot \nabla u_\varepsilon+\nabla g=
 \varepsilon^2\nabla (K\Delta \rho_\varepsilon+\frac{1}{2}K'|\nabla \rho_\varepsilon|^2),
 \\
 (\rho_\varepsilon-\rho_\infty,u_\varepsilon)|_{t=0}=\sum_{k=0}^N\varepsilon^k(r_0^k,u_0^k),
\end{array}
\right.
\end{equation*}
with for any $0\leq k\leq N,\ (r_0^k,u_0^k)\in H^{n-[3k/2]}$, $n>d/2+4+[3N/2]$. 
\\
Let $(\rho^{\text{app}},u^{\text{app}})\in C\big([0,T],(\rho_\infty+H^{n+1-[3N/2]})\times 
H^{n-[3N/2]}\big)$ given by  Proposition 
\ref{existapprox1}. For $\varepsilon$ 
small enough, and $n-[3N/2]-3$ even,  the exact solution $(\rho_\varepsilon,u_\varepsilon)$ 
exists on $[0,T]$ and satisfies 
$$
\|(\rho_\varepsilon-\rho^{\text{app}},u_\varepsilon-u^{\text{app}})\|_{L^\infty([0,T],H^{n-[3N/2]-3})}=O\left(\varepsilon^{N+1}\right).
$$
\end{theo}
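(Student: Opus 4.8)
The plan is to derive energy estimates on the error $(\tr,\tu):=(\rho_\varepsilon-\rho^{\text{app}},u_\varepsilon-u^{\text{app}})$ and to close them by a continuation argument, exploiting the fact that the dispersive capillary term — although of third order with a prefactor $\varepsilon^2$ — is rendered harmless by the symmetric structure of the Euler–Korteweg system in the manner of Proposition \ref{propenergie}. First I would subtract the approximate system of Proposition \ref{existapprox1} from the exact system. Since both carry the same initial data $\sum_{k=0}^N\varepsilon^k(r_0^k,u_0^k)$, the error starts from $(\tr,\tu)|_{t=0}=(0,0)$ and solves
\begin{equation*}
\left\{
\begin{array}{ll}
\partial_t\tr+\text{div}(\rho^{\text{app}}\tu+\tr\,u^{\text{app}})+\text{div}(\tr\,\tu)=-e_1,\\
\partial_t\tu+u^{\text{app}}\cdot\n\tu+\tu\cdot\n u^{\text{app}}+\n(g'(\rho^{\text{app}})\tr)
=\varepsilon^2\n\big(K(\rho^{\text{app}})\Delta\tr+\cdots\big)+Q-e_2,
\end{array}
\right.
\end{equation*}
where the displayed $\varepsilon^2$ term is the linearization of the capillarity about $\rho^{\text{app}}$ and $Q$ gathers all contributions at least quadratic in $(\tr,\tu)$ (from $\tu\cdot\n\tu$, the Taylor remainder of $g$, and the nonlinear part of the Korteweg term).

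Second, I would fix $s=n-[3N/2]-3$ (even, as assumed) and introduce the $\varepsilon$-dependent Korteweg energy adapted to the error,
\begin{equation*}
E_s(t)=\|\tr\|_{H^s}^2+\|\tu\|_{H^s}^2+\varepsilon^2\|\n\tr\|_{H^s}^2,
\end{equation*}
with appropriate $\rho^{\text{app}}$-dependent weights; the last term is exactly what tames the third-order contribution. Applying $(1-\Delta)^{s/2}$ — a genuine differential operator since $s$ is even, matching the setting of Proposition \ref{propenergie} — and pairing against the symmetrizer, the linear capillary term combines with the mass–momentum coupling into a skew-symmetric operator whose energy contribution vanishes up to commutators. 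What survives are: commutators of $(1-\Delta)^{s/2}$ with the coefficients $g'(\rho^{\text{app}}),u^{\text{app}},K(\rho^{\text{app}})$, whose $H^{s+1}$ norms are $O(1)$ uniformly in $\varepsilon$ by Proposition \ref{existapprox1}; the quadratic term $Q$; and the source $(e_1,e_2)$. Since the hypothesis is strengthened to $n>d/2+4+[3N/2]$ one has $s>d/2+1$, hence $H^s\hookrightarrow W^{1,\infty}$, and Kato–Ponce/Moser estimates bound the commutators and $Q$ by $C(E_s+E_s^{3/2})$ with $C$ uniform in $\varepsilon$. Because $(e_1,e_2)\in C_T(H^{n_N-1}\times H^{n_N-3})=C_T(H^{s+2}\times H^{s})$ with both norms $O(\varepsilon^{N+1})$, the source contributes $O(\varepsilon^{2(N+1)})+\tfrac12E_s$ (the $\varepsilon^2\|\n\tr\|_{H^s}^2$ part only needs $\|e_1\|_{H^{s+1}}$, which is available), giving
\begin{equation*}
\frac{d}{dt}E_s\le C\big(E_s+E_s^{3/2}\big)+C\varepsilon^{2(N+1)}.
\end{equation*}

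Third, I would run the continuation. By Benzoni–Danchin–Descombes \cite{Benzoni1} the exact solution exists on a maximal interval $[0,T_\varepsilon)$, and the inequality above holds there as long as $(\rho_\varepsilon,g'(\rho_\varepsilon))$ remains in the region where the symmetrizer is defined. Since $\min(g'(\rho^{\text{app}}),\rho^{\text{app}})\ge\alpha/2$ and $\|(\tr,\tu)\|_{L^\infty}\lesssim E_s^{1/2}$ by Sobolev embedding, a bootstrap assumption $E_s\le\varepsilon^{2(N+1)}$ guarantees $\min(g'(\rho_\varepsilon),\rho_\varepsilon)\ge\alpha/4$ for $\varepsilon$ small; Gronwall from $E_s(0)=0$ then yields $E_s(t)\le C'\varepsilon^{2(N+1)}$ on $[0,\min(T,T_\varepsilon)]$. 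This strictly improves the bootstrap assumption and, via the blow-up criterion of \cite{Benzoni1}, forces $T_\varepsilon>T$, so the solution exists on all of $[0,T]$. Taking square roots gives $\|(\tr,\tu)\|_{L^\infty([0,T],H^{s})}=O(\varepsilon^{N+1})$, which is the claim.

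The main obstacle is the second step: securing an energy inequality whose constant $C$ does \emph{not} degenerate as $\varepsilon\to0$, in spite of the $\varepsilon^2$ third-order term. This is where the Korteweg structure is indispensable — after symmetrization the dispersive part must be exactly skew-symmetric up to commutators with $\n\rho^{\text{app}}$-type coefficients that are $O(1)$, so that the $\varepsilon^2\|\n\tr\|_{H^s}^2$ contribution to $E_s$ serves only to absorb the nonlinear capillary remainder inside $Q$ and never generates an inverse power of $\varepsilon$. Making this cancellation rigorous at the Sobolev level $s$ is precisely the content of Proposition \ref{propenergie}, and verifying that the error's variable coefficients (supplied with uniform bounds by Proposition \ref{existapprox1}) do not spoil it is the technical heart of the argument.
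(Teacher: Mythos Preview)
Your proposal is correct and follows the same strategy as the paper: a weighted energy estimate on the error with an $\varepsilon^2\|\nabla\tr\|_{H^s}^2$ contribution to absorb the third-order capillary term, fed by the $O(\varepsilon^{N+1})$ bounds on $(e_1,e_2)$ from Proposition~\ref{existapprox1}, and closed by Gronwall plus a bootstrap on the positivity of $\rho_\varepsilon$ and $g'(\rho_\varepsilon)$. The only cosmetic difference is that the paper works in the complex variable $\tz=\tu+i\tw$ with weights evaluated at the \emph{exact} $\rho$ (this is packaged separately as Proposition~\ref{estimerreur}), so that the transport and dispersion carry the full coefficients $u,w,a$ on the left and the right-hand side is linear in $(\tz,\tr)$ rather than containing your quadratic remainder $Q$ --- but the resulting differential inequality and the closure are identical.
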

\paragraph{Remark} The restriction ``$n-[3N/2]-3$ even'' is purely technical and related to our choice 
of energy for simplicity of the proof. The restriction can be  lifted by using 
a bit of pseudo-differential calculus as in \cite{Benzoni1}, and replaced by the sharper condition 
$n>d/2+4+[3N/2]$ with $n$  real rather than an integer.
\vspace{2mm}\\
When the spatial domain is $\R^d_+$, our results are not as complete : even the derivation 
of energy estimates requires to work in the special case $K(\rho)=1/\rho$. 
Arbitrarily precise approximate solutions in the sense of Proposition \ref{existapprox1} 
exist, but the convergence to the exact solution is still open. For consistency in 
section \ref{sechalfspace}, we construct the approximate solution in the special case $K=1/\rho$, 
but this part can be easily generalized to general $K$. On the other hand  irrotationality is 
an important simplification. For technical simplicity, we do not 
 track the precise regularity assumptions in this case (instead we work with smooth 
 functions) and we restrict the analysis to an irrotational velocity. In order to take into account the fast variation of the solution near 
the boundary, we introduce the notation $x=(x',x_d)\in \R^{d-1}\times\R^{+*}$. The use of a capital 
letter for a function $F$ generically means that it writes as 
$F(x,t)=\widetilde{F}(x',x_d/\varepsilon,t)$.\\
As usual for boundary value problems, the smoothness of the data is not enough to ensure
the smoothness of the solution, we refer to section \ref{sechalfspace} for a description of the 
additional compatibility conditions, and \ref{secfunctional} for the functional settings.
\begin{prop}[Approximate solution as a two scale expansion]\label{BKWdomain}
Assume $K(\rho)=1/\rho$. \\
Let $N\in\N$, $(r_0^k,u_0^k)_{0,\leq k\leq N}\in H^\infty(\R^d_+)$ satisfying the compatibility conditions, 
and $(u_0^k)_{0\leq k\leq N}$ irrotational. \\
There exists $\varepsilon>0$ and $T>0$ such that for $0<\varepsilon<\varepsilon_0$, there exists 
an approximate solution $(\rho^{\text{app}}-\rho_\infty,u^{\text{app}})=\sum_0^N
\varepsilon^k(r^k+R^k,u^k+U^k)$, where $R^k(x,t)=\tilde{R^k}(x',x_d/\varepsilon,t),\ U^k=\n \Phi^k,$ 
$\Phi^k(x,t)=\tilde{\Phi^k}(x',x_d/\varepsilon,t)$, and $\Phi^0=\Phi^1=0$, solution of 
\begin{equation*}
\left\{
\begin{array}{ll}
 \partial_t\rho+\text{div}(\rho u)=e_1+E_1,\\
 \partial_tu+u\cdot \nabla u+\nabla g=
 \varepsilon^2\nabla (K\Delta \rho+\frac{1}{2}K'|\nabla \rho|^2)+e_2+E_2,
 \\
 (\rho-a,u)|_{t=0}=\sum_{k=0}^N\varepsilon^k(r_0^k,u_0^k),
\end{array}
\right.
\end{equation*}
with for any $n>0$, $\|(e_1,e_2)\|_{C_TH^n}=O(\varepsilon^{N+1)},$ and 
$E_1=\tilde{E_1}(x',x_d/\varepsilon)$ 
(respectively $E_2$) is $O(\varepsilon^{N+1})$ in $\mathcal{E}_T$, respectively $O(\varepsilon^{N})$.
\end{prop}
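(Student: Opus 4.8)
The plan is to construct the expansion by a two-scale matched asymptotics procedure, treating separately the slow variable $x$ and the fast normal variable $z=x_d/\varepsilon$, and to organize it as two coupled cascades: an outer (regular) cascade for $(r^k,u^k)$ and an inner (boundary layer) cascade for $(\tilde R^k,\tilde\Phi^k)$, linked through the boundary conditions $\rho|_{x_d=0}=1$, $u_d|_{x_d=0}=0$ and through the far-field matching $\tilde R^k,\tilde\Phi^k\to 0$ as $z\to\infty$. First I would insert the ansatz into \eqref{EK} with $K=1/\rho$, use the rule $\partial_{x_d}F=\varepsilon^{-1}\partial_z\tilde F$ on boundary layer terms, and Taylor-expand every nonlinear coefficient ($g$, $1/\rho$ and their derivatives) together with the traces $r^k(x',\varepsilon z,t)$ of the outer profiles around $x_d=0$. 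Sorting by powers of $\varepsilon$ and by scale yields, for the regular part, the expected cascade: at orders $0$ and $1$ the (linearized) Euler equations, and at order $k\geq 2$ the linearized Euler operator around $(\rho_\infty+r^0,u^0)$ with a source coming from the capillary term applied to $r^{k-2}$. These are solved on $\R^d_+$ with the impermeability condition $u^k_d|_{x_d=0}=0$; irrotationality is preserved by writing $u^k=\n\phi^k$ and solving the resulting system for the potential, so the outer fields are smooth in $C_TH^\infty$.

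The heart of the construction, and the main obstacle, is the inner cascade. Because the $\varepsilon^2$ in front of the capillary term is exactly compensated by the two $\partial_z$ that a double normal derivative produces on a boundary layer profile, the Korteweg term survives at leading order in the fast variable, so the profiles $\tilde R^k$ solve ODEs in $z\in[0,\infty)$ with $(x',t)$ as parameters. The leading profile equation is nonlinear and, for $K=1/\rho$, reduces to the stationary Bohm/Gross-Pitaevskii profile equation; its relevant solution is the monotone kink connecting the wall value $\tilde R^0(x',0,t)=1-\rho_\infty-r^0(x',0,t)$ to $0$ at $z=+\infty$, which one checks decays exponentially. At higher orders the profiles solve linear ODEs $\mathcal L\,\tilde R^k=(\text{lower order data})$, where $\mathcal L$ is the linearization about the leading profile; the delicate points are (i) to show that $\mathcal L$ is invertible on exponentially decaying functions, equivalently that the kink is nondegenerate, and (ii) to extract the solvability conditions, which when present fix the boundary data of the outer cascade and thereby close the coupling. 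The vanishing $\Phi^0=\Phi^1=0$ reflects that the leading boundary layer lives in the density only: the normal velocity correction appears first at order $\varepsilon^2$, consistently with $u_d|_{x_d=0}=0$ and with the vanishing of $u^0_d$ on the wall, which gains one power of $\varepsilon$ in the transport terms $u_d\partial_{x_d}(\cdot)$.

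Finally I would substitute the truncated sum into the system and collect the residuals. The purely regular remainder $(e_1,e_2)$ is $O(\varepsilon^{N+1})$ in $C_TH^n$ by construction of the outer cascade. The boundary layer remainder comes from truncating the inner expansion and from the cross terms in which a slow derivative hits an inner profile or a product of inner and outer terms spills into a higher order; measured in the boundary layer norm $\mathcal E_T$ (the natural $L^2_z$-weighted, $x'$-Sobolev scale adapted to the $\varepsilon$-rescaling and the exponential decay), the continuity residual $E_1$ is $O(\varepsilon^{N+1})$, while the momentum residual $E_2$ loses exactly one power, $O(\varepsilon^N)$, because $\n$ acting on a boundary layer term produces a factor $\varepsilon^{-1}$. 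Verifying this single-power loss, and that no uncompensated $\varepsilon^{-1}$ survives in $E_1$, is the main technical check of this last step.
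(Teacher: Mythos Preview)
Your overall strategy matches the paper's: a two-scale ansatz, an outer Euler cascade, an inner ODE cascade in the fast variable $z=x_d/\varepsilon$, and residual bookkeeping giving the stated orders. Two points, however, deserve correction or sharpening.

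\textbf{Coupling and solving order.} Your claim that the outer problems are solved with the plain impermeability condition $u^k_d|_{x_d=0}=0$ is only correct at order $k=0$. For $k\geq 1$ the actual boundary condition reads $\partial_d\phi^k|_{x_d=0}+\partial_d\Phi^{k+1}|_{x_d=0}=0$: the trace of the inner velocity corrector feeds back as inhomogeneous boundary data for the outer problem. This forces a precise solving order, which is the organizing principle of the paper's argument: one first obtains $\Phi^{k+1}$ from the boundary-layer mass equation (a first-order ODE in $z$, solved by direct integration), then uses its trace as boundary datum to solve the linearized Euler system for $(\phi^k,r^k)$, and only then solves the second-order ODE for $R^k$. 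The reason this cascade closes---i.e.\ the reason the mass-layer equation determining $\Phi^{k+2}$ does \emph{not} involve the yet-unknown $R^{k+1}$---is precisely that $\Phi^0=\Phi^1=0$ and $\underline{\partial_d\phi^0}=0$. You state $\Phi^0=\Phi^1=0$ as a conclusion but do not identify its structural role; the paper stresses this point explicitly.

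\textbf{Invertibility of the linearized layer operator.} Your plan to prove invertibility of $\mathcal L$ via nondegeneracy of the kink is workable but unnecessarily delicate. The paper exploits the specific form $K=1/\rho$ to rewrite the linear layer equation for $R^k$ as $\partial_z\big(a\,\partial_z R^k\big)=b\,R^k+F$ with $a=1/(\underline{\rho^0}+R^0)>0$ and $b=g'(\underline{\rho^0}+R^0)>0$, which is manifestly coercive on exponentially weighted $H^1$ and is dispatched by Lax--Milgram. No spectral or nondegeneracy analysis is needed; this is one place where the hypothesis $K=1/\rho$ genuinely simplifies matters. Similarly, the leading profile $R^0$ is handled in the paper via the substitution $A^0=\sqrt{\underline{\rho^0}+R^0}$, which turns the nonlinear layer ODE into $\partial_z^2 A^0=A^0\big(g((A^0)^2)-g(\underline{\rho^0})\big)$ and makes the exponentially decaying solution immediate by phase-plane arguments.

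Your explanation of the residual orders is correct in spirit; the single $\varepsilon^{-1}$ loss in $E_2$ indeed comes from the outer gradient hitting a layer profile.
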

In the rest of the paper, we shall assume $\rho_\infty=1$. This can always be done with the 
change of unknown $\rho=\rho_\infty \widetilde{\rho}$, since it preserves the 
assumption $g'(1)>0$.

\paragraph{Plan of the paper} 
Section \ref{secfunctional} is devoted to basic notations and reminder on Sobolev spaces. 
Section $3$ is focused on the proof of theorem \ref{mainthRd} : we first prove uniform 
energy estimates which imply that the solution of the Euler-Korteweg system \eqref{EK}
remains smooth on a time interval independent of $\varepsilon$, then we prove a general 
``drift estimate'' on the difference between an exact and an approximate solution. The 
construction of 
an approximate solution by BKW expansion is described in section \ref{subsecBKW}, the convergence
of the approximate solution to the exact solution is then a direct consequence of the general 
``drift estimate''.
\\
In section \ref{sechalfspace}, we initiate the analysis of the boundary value problem 
on a half space for \eqref{EK} with boundary conditions $u|_{x_d=0}\cdot e_d=0,\ \rho|_{x_d=0}=1$.
We first prove non optimal energy estimates on the solution that degenerate\footnote{We point out 
that even for $\varepsilon=1$, this is a new result.} as $\varepsilon\to 0$. Then (section 
\ref{defbkw} and after), as a possible explanation for the blow up of high $H^s$ norms in the limit $\varepsilon\to 0$, we construct a BKW expansion with boundary layer terms that are smooth 
functions of $x_d/\varepsilon$. In concluding remarks (section \ref{discussion}), we compare the 
effect on the boundary layers of other choices of boundary conditions.

\section{Notations, functional settings}\label{secfunctional}
We denote $A\lesssim B$ when there exists a constant $C$ such that $A\leq CB$. The 
possible dependance of $C$ with respect to some parameters will always be clear in 
whenever the notation is used.
\paragraph{Differential calculus}
A multi-index is generically denoted $\alpha\in \N^d$, its order is $|\alpha|=\sum_1^d\alpha_k$,
the derivative of order $\alpha$ is $\partial^\alpha:=\partial_1^{\alpha_1}\cdots 
\partial_d^{\alpha_d}$.
\\
The gradient of a vector field is the matrix 
$$
\n u=(\partial_iu_j)_{1\leq i,j\leq d}.
$$
\paragraph{Irrotational and solenoidal vector fields} We denote $\Q$, respectively $\MP$, the projector on irrotational, respectively
solenoidal, vector fields : 
$$
\Q=\Delta^{-1}\n \text{div},\ \MP=\text{I}-\Q.
$$
They are continuous self-adjoint projectors on $L^2(\R^d)$. We underline that 
$\Delta \Q$ and $\Delta \MP$ 
are differential operators, in particular for $f$ a smooth function and any $n\geq 1$,
the commutator $[\Delta^n \MP,f]$ is a differential operator of order $2n-1$, while if 
$f$ is not smooth we can use the mild estimates to bound 
$\|[\Delta^n\MP,f]g\|_2\lesssim \|f\|_{H^{2n}}\|g\|_{W^{1,\infty}}+\|f\|_{W^{1,\infty}}\|g\|_{H^{2n-1}}$.
\\
The following simple identities will be often used without mention : 
$$
\Q\n =\n ,\ \MP\n=0,\ \text{div}=\text{div}\Q,\ \text{div}\MP=0.
$$

\paragraph{Functional spaces}
For $s\in \R$ (though we will only use $s$ nonnegative integer), the $H^s$ spaces are defined as 
$$
H^s(\R^d)=\{f\in L^2:\ \int (1+|\xi|^2)^{s}|\widehat{f}(\xi)|^2d\xi<\infty\}.
$$
Of course, when $s$ is an integer, they coincide with the set of $L^2$ functions that have distributional 
derivatives in $L^2$ up to order $s$. 
Due to their even higher simplicity, we shall in particular use the $H^{2n}$ spaces for $n$ integer, that are equivalently defined as 
$$
H^{2n}(\R^d)=\{f\in L^2: \Delta^nf\in L^2\},
$$
with equivalent norm $\|f\|_2+\|\Delta^nf|_2$.\\
The space $\mathcal{E}_T$ is the set of functions smooth and exponentially decaying in 
the $x_d$ variable: 
$$
\mathcal{E}_T=\{F\in H^\infty(\R^d_+\times [0,T]), \exists\,\gamma>0:\ \forall\,j\in \N,
e^{\gamma x_d}\partial_d^jF\in L^\infty(\R_{x_d}^+,H^\infty(\R^{d-1}\times [0,T]))\}.
$$
There is no natural norm on $\mathcal{E}_T$, so in proposition \ref{BKWdomain} by $E_\varepsilon=O(\varepsilon^{N})$, we mean that
there exists $\gamma$ independent of $\varepsilon$ such that for any $n\in \N$, 
$$
\sum_{j\leq n}\|e^{\gamma x_d}\partial_d^jE_\varepsilon\|_{L^\infty(\R_{x_d}^+,H^n(\R^{d-1}\times [0,T]))}
=O(\varepsilon^N).
$$

We recall a few standard properties of Sobolev spaces (see e.g. \cite{BCD}): 
\begin{prop}\label{propsobolev}
Sobolev embedding : 
For $0<s<d/2$, $H^s\hookrightarrow L^p$, $p=\frac{2d}{d-2s}$. \\
For $s>d/2$, $s\notin d/2+\N$, $H^s\hookrightarrow C_b^{s-d/2}$. \\
Gagliardo-Nirenberg type estimates :
$$
\forall\, |\alpha|+|\beta|\leq k\in \N,\ \|D^\alpha fD^\beta g\|_2\lesssim \|f\|_\infty\|g\|_{H^k}
+\|f\|_{H^k}\|g\|_\infty.
$$
Composition rules : for $F$ smooth on some interval $I$, $F(0)=0$, $u\in H^n\cap L^\infty$, 
$n\in \N$, $\overline{\text{Im}(u)}\subset I$.
$$
\|F(u)\|_{H^n}\leq C(\|u\|_\infty)\|u\|_{H^n}.
$$
\end{prop}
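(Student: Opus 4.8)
The three families of estimates are classical, and I would establish them in a uniform way through the inhomogeneous Littlewood--Paley decomposition $f=\sum_{j\geq -1}\Delta_j f$ of \cite{BCD}, where $\Delta_j f$ has Fourier support in an annulus $\{|\xi|\sim 2^j\}$ for $j\geq 0$. The two tools I would use repeatedly are the Bernstein inequality $\|\Delta_j f\|_{L^q}\lesssim 2^{jd(1/p-1/q)}\|\Delta_j f\|_{L^p}$ for $q\geq p$, and the norm equivalence $\|f\|_{H^s}^2\sim\sum_j 2^{2js}\|\Delta_j f\|_2^2$. Each of the three statements then reduces to elementary summation over $j$.

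For the embeddings: when $0<s<d/2$ and $p=2d/(d-2s)$, Bernstein gives $\|\Delta_j f\|_{L^p}\lesssim 2^{jd(1/2-1/p)}\|\Delta_j f\|_2=2^{js}\|\Delta_j f\|_2$ (since $d(1/2-1/p)=s$), so $f$ lies in the Besov space $B^0_{p,2}$ with $\|f\|_{B^0_{p,2}}\lesssim\|f\|_{H^s}$; the continuous inclusion $B^0_{p,2}\hookrightarrow L^p$ for $p\geq 2$ then yields the claim. For $s>d/2$ set $\sigma=s-d/2\notin\N$. The $L^\infty$ bound follows from $\|f\|_\infty\leq\sum_j\|\Delta_j f\|_\infty\lesssim\sum_j 2^{-j\sigma}\bigl(2^{js}\|\Delta_j f\|_2\bigr)\lesssim\|f\|_{H^s}$ by Cauchy--Schwarz and $\sigma>0$, while $\sup_j 2^{j\sigma}\|\Delta_j f\|_\infty\lesssim\sup_j 2^{js}\|\Delta_j f\|_2\lesssim\|f\|_{H^s}$ identifies $f$ with an element of $B^\sigma_{\infty,\infty}=C^\sigma_b$.

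The Gagliardo--Nirenberg product estimate I would reduce to the scalar interpolation inequality
$$
\|D^j u\|_{L^{2m/j}}\lesssim\|u\|_\infty^{1-j/m}\|u\|_{H^m}^{j/m},\qquad 0\leq j\leq m,
$$
with the convention $L^{2m/0}=L^\infty$, which is itself standard and can be read off from the dyadic blocks. Given $|\alpha|+|\beta|=m\leq k$, set $a=|\alpha|$, $b=|\beta|$ (the case $m=0$ being trivial, as $\|fg\|_2\leq\|f\|_\infty\|g\|_{H^k}$). Since $a/(2m)+b/(2m)=1/2$, H\"older gives $\|D^\alpha f\,D^\beta g\|_2\leq\|D^\alpha f\|_{L^{2m/a}}\|D^\beta g\|_{L^{2m/b}}$, and interpolating each factor at level $m$ (using $1-a/m=b/m$ and $1-b/m=a/m$) bounds this by $\bigl(\|f\|_\infty\|g\|_{H^m}\bigr)^{b/m}\bigl(\|f\|_{H^m}\|g\|_\infty\bigr)^{a/m}$. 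A weighted Young inequality with conjugate exponents $m/b$ and $m/a$ then yields $\lesssim\|f\|_\infty\|g\|_{H^m}+\|f\|_{H^m}\|g\|_\infty\leq\|f\|_\infty\|g\|_{H^k}+\|f\|_{H^k}\|g\|_\infty$, using $m\leq k$.

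Finally, for the composition estimate I would expand $\partial^\alpha F(u)$, for $1\leq|\alpha|\leq n$, by the Fa\`a di Bruno formula as a finite sum of terms $F^{(\ell)}(u)\prod_{i=1}^\ell\partial^{\beta_i}u$ with $\sum_i\beta_i=\alpha$ and each $|\beta_i|\geq 1$. Since $F$ is smooth on $I$ and $\overline{\mathrm{Im}(u)}\subset I$, each $\|F^{(\ell)}(u)\|_\infty\leq C(\|u\|_\infty)$. For the product, generalized H\"older with exponents $2|\alpha|/|\beta_i|$ (whose reciprocals sum to $1/2$ since $\sum_i|\beta_i|=|\alpha|$), together with the interpolation inequality above at level $|\alpha|$, gives $\|\prod_i\partial^{\beta_i}u\|_2\lesssim\|u\|_\infty^{\ell-1}\|u\|_{H^{|\alpha|}}\leq\|u\|_\infty^{\ell-1}\|u\|_{H^n}$, whence $\|\partial^\alpha F(u)\|_2\leq C(\|u\|_\infty)\|u\|_{H^n}$. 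The term $\alpha=0$ is where the hypothesis $F(0)=0$ enters: $|F(u)|=|F(u)-F(0)|\leq\|F'\|_{L^\infty(I)}|u|$ gives $\|F(u)\|_2\leq C(\|u\|_\infty)\|u\|_2$, which would otherwise fail to be $L^2$ on the full space. Summing over $|\alpha|\leq n$ concludes. The only genuinely load-bearing ingredient is the scalar interpolation inequality feeding both the product and composition estimates; everything else is bookkeeping, and I expect the non-integer H\"older endpoint to be the one place requiring a little care with the Littlewood--Paley characterization of $C^\sigma_b$.
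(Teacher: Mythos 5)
Your proof is correct, but there is nothing in the paper to compare it to: the paper states this proposition as a reminder of standard facts and simply points to \cite{BCD} without proof, so you have effectively supplied the argument the paper delegates to the literature. Your Littlewood--Paley route for the embeddings is essentially the one in that reference: Bernstein plus the dyadic characterization of $H^s$, the inclusion $B^0_{p,2}\hookrightarrow L^p$ for $p\geq 2$ (which hides the $L^p$ square-function theorem, worth acknowledging), and the H\"older--Zygmund identification $B^\sigma_{\infty,\infty}=C^\sigma_b$ for $\sigma\notin\N$ --- your closing caveat about the non-integer endpoint is exactly why the statement excludes $s\in d/2+\N$, since at integer $\sigma$ one only obtains the Zygmund class. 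For the last two items you depart from \cite{BCD}'s paradifferential treatment (Bony paraproduct for products, paralinearization for composition) in favor of the more classical Moser-style argument: the interpolation inequality $\|D^ju\|_{L^{2m/j}}\lesssim\|u\|_\infty^{1-j/m}\|u\|_{H^m}^{j/m}$ combined with H\"older and weighted Young for the product estimate, and Fa\`a di Bruno for $F(u)$. The exponent bookkeeping checks out ($a/(2m)+b/(2m)=1/2$, conjugate exponents $m/a$ and $m/b$, and $\sum_i(1-|\beta_i|/|\alpha|)=\ell-1$ in the composition step), and this route is arguably more elementary, at the price of invoking Gagliardo--Nirenberg as a black box where the paraproduct approach would stay entirely within the dyadic calculus. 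Two small points to make explicit: in the composition estimate the constant really depends on $\sup|F^{(\ell)}|$ over the compact set $\overline{\mathrm{Im}(u)}\subset I$, not on $\|u\|_\infty$ alone (an abuse the paper's statement shares), and your mean-value step for the $\alpha=0$ term implicitly uses $0\in I$ so that the segment from $0$ to $u(x)$ stays in $I$ --- harmless, since $F(0)=0$ already presupposes it.
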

A simple consequence of proposition \ref{propsobolev} is that for $|\alpha|+|\beta|=n+1$,
$n>d/2+1$, $1\leq \min(|\alpha|,|\beta|)$,
$$
\|\partial^\alpha f\partial^\beta g\|_2\lesssim \|f\|_{W^{1,\infty}}\|g\|_{H^n}
+\|f\|_{H^n}\|g\|_{W^{1,\infty}}\lesssim \|f\|_{H^n}\|g\|_{H^n}.
$$
In particular, we will frequently use the mild estimate : for $n>d/2+1$,
\begin{equation}\label{mainderiv}
\forall\,\alpha\in \N^d,\ |\alpha|=n:\ \partial^\alpha (f\n g)=f(\partial^\alpha \n g)+R,
\ \|R\|_2\lesssim \|f\|_{W^{1,\infty}}\|g\|_{H^n}+\|f\|_{H^n}\|g\|_{W^{1,\infty}}.
\end{equation}

\section{Analysis on the whole space}\label{fullspace}
\subsection{Energy estimates and the time of existence}
We remind that thanks to the change of variable $\rho\to \rho_\infty\rho$ we assume 
$\rho_\infty=1$.
Energy estimates for the Euler-Korteweg system have been derived in numerous settings, including 
the case with a small parameter \cite{BenChir}. We include here for completeness a self contained 
proof more in the spirit of \cite{Audiard9} that does not use pseudo-differential calculus.
\\
It relies on the following reformulation (due to Fr\'ed\'eric Coquel) : 
set $w=\varepsilon\sqrt{K/\rho}\nabla \rho=\n l$ 
where $l(\rho)=\varepsilon\int \sqrt{K/\rho}d\rho$, $l(1)=0$, so 
that $\n l=w$. Then 
$$
\partial_tl+u\cdot w+\varepsilon\sqrt{\rho K}\text{div}u=0.
$$
Set $a=\sqrt{\rho K}$, $z=u+iw$, after some computations
\begin{equation}\label{EKz}
\partial_tz+u\cdot \nabla z+i\nabla z\cdot w+g'(\rho)\nabla \rho+i\varepsilon\nabla(a\text{div}z)=0.
\end{equation}
The hierarchy of modified energies is the following 
$$
\forall\,n\in\N,\ E_n(\rho,u)=\frac{1}{2}\int a^{2n}\rho|\Q\Delta ^nz|^2
+|\MP (\theta_n\Delta^n z|)^2
+g'a^{2n}|\Delta^n(\rho-1)|^2dx,
$$
where $\theta_n$ is a function of $\rho$ such that 
$\frac{1}{2}((\theta_n)^2)'=\frac{\varphi_n}{a}\sqrt{K/\rho}$, that we will choose positive on a suitable interval. Heuristically, the first two terms in $E_n$ control $(\n\rho,u)$ in $H^{2n}$, 
but this control degenerates as $\varepsilon\to 0$, which is why we incorporate the third lower
order term.\\
The weights $a^{2n}\rho$ be guessed from the case $n=0$, where the conserved energy is 
$\int \rho|z|^2/2+G(\rho)dx$, and at first order $G(\rho)=g'(\rho)(\rho-1)^2/2$. For 
$n>0$, one adds the weight $a^n$ for commutation with the differential operator 
$\n(a\text{div}\cdot )$. 
\\
The following lemma is elementary, a proof can be found in \cite{Audiard9}, end of appendix A:
\begin{lemma}\label{equivnorme}
 Assume there exists some $\alpha>0$ such that $\text{Im}(\rho )\subset I\subset [\alpha,1/\alpha[$, 
 $\|\rho\|_{W^{1,\infty}}\leq \alpha^{-1}$ and
 $\forall\, s \in I,\ g'(s)\geq \alpha>0$. Choose $\theta_n$ such that $\theta_n|_{I}\geq \alpha$, 
 then there exists $c_\alpha,C_\alpha>0$ such that
 $$
c_\alpha(\|z\|_{H^{2n}}^2+\|\rho-1\|_{H^{2n}})^2\leq  E_0+E_n\leq C_\alpha
(\|z\|_{H^{2n}}^2+\|\rho-1\|_{H^{2n}})^2,
 $$
\end{lemma}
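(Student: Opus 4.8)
The plan is to reduce the weighted energy to an unweighted sum of $L^2$ norms of $z,\rho-1$ and their top-order Laplacian powers, the only genuine subtlety being that the solenoidal projector $\MP$ does not commute with the variable weight $\theta_n=\theta_n(\rho)$. First I would record that under the hypotheses all coefficients are pinched between positive constants depending only on $\alpha$: since $\rho(x,t)\in I\subset[\alpha,1/\alpha[$ and $K$ is smooth and positive on the compact interval $\overline I$, the quantity $a=\sqrt{\rho K}$, hence $a^{2n}\rho=\rho^{n+1}K^n$, is bounded above and below; similarly $\alpha\le g'\le C_\alpha$ and, by choice, $\alpha\le\theta_n\le C_\alpha$ on $\overline I$. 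Moreover $E_0$ is comparable to $\|z\|_{L^2}^2+\|\rho-1\|_{L^2}^2$, being (up to constants) the order-zero energy $\int\rho|z|^2+g'(\rho-1)^2\,dx$, in which no projector appears.

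For the upper bound I would simply use that $\MP$ is an $L^2$-orthogonal projector, so $\|\MP(\theta_n\Delta^n z)\|_{L^2}\le\|\theta_n\|_{L^\infty}\|\Delta^n z\|_{L^2}$ and $\|\Q\Delta^n z\|_{L^2}\le\|\Delta^n z\|_{L^2}$. Combined with the equivalent norm $\|f\|_{H^{2n}}^2\simeq\|f\|_{L^2}^2+\|\Delta^n f\|_{L^2}^2$ recalled in Section \ref{secfunctional}, every term of $E_0+E_n$ is then bounded by $\|z\|_{H^{2n}}^2+\|\rho-1\|_{H^{2n}}^2$. No commutator is needed for this direction.

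The lower bound is where the projector/weight interplay enters. The third term of $E_n$ controls $\|\Delta^n(\rho-1)\|_{L^2}^2$ and the first controls $\|\Q\Delta^n z\|_{L^2}^2$, directly, since their weights are bounded below. For the solenoidal part, using $\theta_n\ge\alpha$ I would write
$$\|\MP\Delta^n z\|_{L^2}\le\alpha^{-1}\|\theta_n\MP\Delta^n z\|_{L^2}\le\alpha^{-1}\big(\|\MP(\theta_n\Delta^n z)\|_{L^2}+\|[\MP,\theta_n]\Delta^n z\|_{L^2}\big).$$
The first term is $\lesssim\sqrt{E_n}$. For the commutator I would use that $\MP=\text{I}-\Delta^{-1}\n\,\text{div}$ has an order-zero symbol, so $[\MP,\theta_n]$ is smoothing of order one (a Calderón-type commutator, in the spirit of the mild estimate recalled in Section \ref{secfunctional}); hence $\|[\MP,\theta_n]\Delta^n z\|_{L^2}\lesssim\|z\|_{H^{2n-1}}$, with constant controlled by $\|\rho\|_{W^{1,\infty}}$ (for the smooth $\rho$ of the applications the gain is immediate).

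It then remains to absorb the lower-order remainder. By interpolation $\|z\|_{H^{2n-1}}^2\le\delta\|z\|_{H^{2n}}^2+C_\delta\|z\|_{L^2}^2$, and using $\|\Delta^n z\|_{L^2}^2=\|\Q\Delta^n z\|_{L^2}^2+\|\MP\Delta^n z\|_{L^2}^2$ together with the equivalent norm, I would collect $\|z\|_{H^{2n}}^2\lesssim E_0+E_n+\delta\|z\|_{H^{2n}}^2+C_\delta\|z\|_{L^2}^2$. Choosing $\delta$ small to absorb $\delta\|z\|_{H^{2n}}^2$ on the left and bounding $\|z\|_{L^2}^2\lesssim E_0$ yields $\|z\|_{H^{2n}}^2\lesssim E_0+E_n$; the same equivalent-norm argument gives $\|\rho-1\|_{H^{2n}}^2\lesssim E_0+E_n$ with no commutator. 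I expect the main (indeed the only nontrivial) obstacle to be this commutator $[\MP,\theta_n]$: establishing the gain of one derivative so that $\|[\MP,\theta_n]\Delta^n z\|_{L^2}$ is genuinely lower order and hence absorbable; every other step is routine bookkeeping of bounded weights.
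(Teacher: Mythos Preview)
Your argument is correct. The paper does not actually prove this lemma; it declares it elementary and refers to \cite{Audiard9}, end of Appendix~A, so there is no in-paper proof to compare against. Your route---pinch the scalar weights using the compactness of $\overline I$, use the $L^2$-orthogonality of $\Q$ and $\MP$ for the upper bound, and for the lower bound isolate the single nontrivial ingredient $[\MP,\theta_n]$, exploit its order-$(-1)$ gain, and absorb the resulting $H^{2n-1}$ remainder by interpolation against $E_0$---is the standard one and presumably what the cited reference does.

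One small slip worth flagging: by the general formula for $E_n$, the case $n=0$ reads $\tfrac12\int\rho|\Q z|^2+|\MP(\theta_0 z)|^2+g'|\rho-1|^2\,dx$, so strictly speaking it also carries the projectors and the commutator step is needed there too. Your claim that ``no projector appears'' in $E_0$ matches instead the paper's motivational remark that the conserved $n=0$ energy is $\int\rho|z|^2/2+G(\rho)\,dx$; with that (natural) reading of $E_0$, your treatment of the base case is fine, and the absorption argument for $n\ge 1$ then closes exactly as you wrote.
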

\noindent
In what follows, we denote $E_n(t)=E_n(\rho(t),u(t))$, $(\rho,u)$ solution of the Euler-Korteweg 
system.
\\
For conciseness in the computations, we define 
\begin{equation}\label{weights}
\varphi_n(\rho):=a^{2n}\rho,\ \psi_n(\rho):=a^{2n}g'(\rho).
\end{equation}
\begin{prop}\label{propenergie}
Let $(\rho_0-1,u_0)\in H^{2n+1}\times H^{2n}$, $2n>d/2+1$, $(\rho-1,u)\in 
C([0,T[,H^{2n+1}\times H^{2n})$ the unique local solution of \eqref{EK}.
\\
Assume that for some $T'\leq T$, $\alpha>0$, $\rho(\R^d\times [0,T'])\subset I\subset [\alpha,
\infty[$,
such that $g'|_{I}\geq \alpha$, then we have for $t\in [0,T'[$, $k\leq n$
\begin{equation}\label{energie}
\frac{dE_k}{dt}\leq C(\|u\|_{\infty},\|\rho\|_{W^{1,\infty}},\alpha)
(\|z\|_{H^{2k}}+\|\rho-1\|_{H^{2k}})^2(\|u\|_{W^{1,\infty}}+\|\rho\|_{W^{2,\infty}}).
\end{equation}
In particular the following holds : 
\begin{enumerate}
 \item If $\text{Im}(\rho_0)\subset I'\subset [2\alpha,\infty]$, such that on $I'$, $g'\geq 2\alpha$, 
 then the solution exists on some time interval $[0,T_1]$, $T_1$ independent of 
 $\varepsilon$, $(\rho-1,u)\in C_{T_1}(H^{2n}\times H^{2n})$ with bounds independent of 
 $\varepsilon$, and there exists an interval $I_1\subset [\alpha,\infty[$ on which $g'\geq \alpha$ such that 
 $\rho([0,T_1]\times \R^d)\subset I_1$, .
 \item In the limit $\|(z_0,\rho_0-1)\|_{H^{2n}\times H^{2n}}\to 0$, there exists $c>0$ such that 
 the time of existence is bounded from  below by $c/\|(z_0,\rho_0-1)\|_{H^{2n}\times H^{2n}}$.
\end{enumerate}

\end{prop}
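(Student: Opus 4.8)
The plan is to run a weighted $L^2$ energy estimate directly on the Coquel reformulation \eqref{EKz}, differentiating each $E_k$ in time and exploiting the skew-symmetry of the capillary operator with respect to the symmetrizer encoded in the weights $\varphi_k,\psi_k,\theta_k$ of \eqref{weights}; it suffices to treat $k=n$, the lower orders being identical. First I would apply $\Delta^n$ to \eqref{EKz} and split the resulting identity with the projectors $\Q$ and $\MP$, writing $v:=\Q\Delta^n z$ for the irrotational part and $\MP\Delta^n z$ for the solenoidal one. Since $\MP\n=0$, the capillary term $i\varepsilon\n(a\,\text{div}\,z)$, being a gradient, only feeds the $\Q$-component. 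Differentiating $E_n$ then produces three kinds of contributions: terms where $\partial_t$ falls on the weights $a^{2n}\rho$, $g'a^{2n}$, $\theta_n^2$, which are reduced to lower order through the continuity equation $\partial_t\rho=-\text{div}(\rho u)$; the transport and pressure terms $u\cdot\n z$, $i\n z\cdot w$, $g'\n\rho$, which after integration by parts yield the usual symmetric-hyperbolic structure, controlled by $(\|u\|_{W^{1,\infty}}+\|\rho\|_{W^{1,\infty}})$ times the energy; and the capillary term.

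For the capillary term the decisive algebraic point is that on the irrotational part its leading contribution vanishes identically. Pairing $a^{2n}\rho\,\overline v$ with $-i\varepsilon\n(a\,\text{div}\,v)$ and integrating by parts gives $\text{Re}\,i\varepsilon\int a^{2n+1}\rho\,|\text{div}\,v|^2\,dx$ plus a coefficient-derivative remainder; the first integral is $i\varepsilon$ times a real quantity, so its real part is zero. This is exactly the ``commutation with the symmetrizer'' that singles out the weight $\varphi_n=a^{2n}\rho$, and it is the reason the solenoidal weight $\theta_n$ is free to be tuned against the analogous transport coupling through the prescribed relation $\tfrac12(\theta_n^2)'=\tfrac{\varphi_n}{a}\sqrt{K/\rho}$.

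The hard part will be the remaining top-order terms: the commutator $[\Delta^n,a\n\text{div}]z$ and the coefficient-derivative remainder $\varepsilon\int a\,(\n(a^{2n}\rho)\cdot\overline v)\,\text{div}\,v\,dx$ both carry $2n+1$ derivatives of $z$ and hence are not controlled by $E_n$ in isolation. The heart of the proof is to show that, once the transport remainders and the pressure--capillary coupling are expanded with the same weights, these derivative-losing pieces either recombine into exact divergences or cancel pairwise against the corresponding top-order part of the $i\n z\cdot w$ term, leaving only expressions in which the extra derivative has been transferred onto a coefficient. Writing $\varepsilon\n\rho=\sqrt{\rho/K}\,w$ converts every surviving $\varepsilon$ into a factor of $w$, so the leftover terms are genuinely of order $2n$ and are estimated by the mild and Gagliardo--Nirenberg inequalities \eqref{mainderiv} and Proposition \ref{propsobolev}, with constants depending only on $\alpha$, $\|u\|_\infty$, $\|\rho\|_{W^{1,\infty}}$. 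Collecting everything yields \eqref{energie}; checking that no factor $\varepsilon^{-1}$ is generated at any stage is the delicate bookkeeping.

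Finally, for the two consequences I would sum \eqref{energie} over $k\le n$ and use Lemma \ref{equivnorme} to replace $E_0+E_n$ by $\|z\|_{H^{2n}}^2+\|\rho-1\|_{H^{2n}}^2$. Because $2n>d/2+1$, the embedding $H^{2n}\hookrightarrow W^{1,\infty}$ bounds the controlling factor by $\sqrt{E_0+E_n}$ uniformly in $\varepsilon$, the second derivatives of $\rho$ always appearing with a compensating $\varepsilon$ and thus entering only through $\|w\|_{W^{1,\infty}}\lesssim\|z\|_{H^{2n}}$. This gives a closed inequality $\tfrac{d}{dt}(E_0+E_n)\le C(\alpha)(E_0+E_n)^{3/2}$. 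A standard continuity argument then yields both claims: for data with $\rho_0$ strictly inside the region where $g'\ge2\alpha$, comparison with the ODE $y'=Cy^{3/2}$ shows the energy stays bounded and $\rho$ remains, on a time $T_1$ depending only on the initial energy and $\alpha$ (hence independent of $\varepsilon$), in a slightly larger interval $I_1$ where $g'\ge\alpha$; in the small-data regime the maximal time of $y'=Cy^{3/2}$ is of order $y(0)^{-1/2}$, giving a time of existence bounded below by $c/\|(z_0,\rho_0-1)\|_{H^{2n}\times H^{2n}}$.
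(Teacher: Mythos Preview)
Your plan follows the paper's approach closely: Coquel reformulation, weighted energies, skew-symmetry of the capillary operator, bootstrap via Lemma~\ref{equivnorme}. The consequences in the last paragraph are handled correctly.

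There is, however, one concrete gap in your description of the cancellation mechanism. You write that the capillary term ``only feeds the $\Q$-component'' because $\MP\nabla=0$, and that $\theta_n$ is tuned ``against the analogous transport coupling''. Both statements miss the actual role of $\theta_n$. The derivative-losing residual that survives from the irrotational energy is the \emph{cross term}
\[
\text{Im}\int_{\R^d}-\varphi_n\,(\MP\Delta^n z\cdot w)\,\text{div}(\Q\Delta^n\overline z)\,dx,
\]
coming from the fact that $\nabla\Delta^n z\cdot w$ (from the $i\nabla z\cdot w$ term) contains the full $\Delta^n z=\Q\Delta^n z+\MP\Delta^n z$, not just its irrotational part; combined with the coefficient-derivative remainder of the capillary term, the purely irrotational pieces cancel (this uses the algebraic identity $-a\varphi_n'+2n\varphi_n a'=-\varphi_n\sqrt{K/\rho}$), leaving only this $\Q$--$\MP$ coupling.

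On the solenoidal side, the point you overlook is that the energy carries the weight \emph{inside} the projector: one differentiates $\|\MP(\theta_n\Delta^n z)\|_2^2$, not $\|\theta_n\MP\Delta^n z\|_2^2$. Thus $\MP(\theta_n\,i\varepsilon\nabla(a\,\text{div}\Delta^n z))$ is \emph{not} zero; the commutator $[\MP,\theta_n]$ produces a term of the same form as the residual above, namely
\[
-\text{Im}\int_{\R^d}\varepsilon\, a\,\theta_n\theta_n'\,(\nabla\rho\cdot \MP\Delta^n\overline z)\,\text{div}\Q\Delta^n z\,dx,
\]
and the relation $\theta_n\theta_n'a\sqrt{\rho/K}=\varphi_n$ is chosen precisely so that the two cross terms cancel. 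Without identifying this $[\MP,\theta_n]$ mechanism your sketch cannot close: the surviving term from the irrotational energy genuinely loses one derivative and has nothing to cancel against if the capillary contribution to the solenoidal energy is declared to vanish.
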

\begin{proof}
If the estimate \eqref{energie} is true, the other points follow from the usual 
bootstrap argument (combined with lemma \ref{equivnorme}), so we focus on \eqref{energie}. Note also that the 
existence result of Benzoni-Danchin-Descombes implies the smoothness of the solutions, hence up to a 
standard approximation argument we can assume that the solution $(\rho,u)$ is as smooth as needed 
for the computations.\\
In the following computations, $R$ denotes generically a term which is controlled by the right hand side in \eqref{energie}. We shall use very often \eqref{mainderiv} without mention, e.g. to replace
$\Q\Delta^n (u\cdot \n z)$ by $u\cdot \n(\Q \Delta^nz)$ plus terms that can be absorbed in $R$.
Let us differentiate the three terms in $E_n$.
\paragraph{Computation of $D_1:=\frac{d}{2dt}\int_{\R^d}a^{2n}\rho |\Q\Delta^nz|^2dx$}
\begin{eqnarray*}
D_1&=& 
\text{Re}\int_{\R^d}-\varphi_n\Delta^n\Q\left(u\cdot \n z+i\n z\cdot w+i\varepsilon\n(a\text{div}z)+\n g\right)\Delta^n\Q\overline{z}\,dx
\\
&=&
\text{Im}\int_{\R^d}\underbrace{\varphi_n\left(\n(\Delta^n z)\cdot w+\varepsilon \Q\Delta^n\n(a\text{div}z)\right)\cdot\Q\Delta^n\overline{z}}_{I_1}
\underbrace{-\varphi_ng'(\Q\Delta^n\n \rho)\cdot \Q\Delta^n u}_{I_2}\,dx
\\
&&-\text{Re}\int_{\R^d}\varphi_n\left(u\cdot \n (\Q\Delta^nz)\right)\cdot\Q\Delta^n\overline{z}dx +R.
\end{eqnarray*}
We have $\text{Re}((u\cdot \n \Q\Delta^nz)\cdot\Q\Delta^n\overline{z})
=u\cdot \n|Q\Delta^nz|^2$, hence with an integration by parts we include this term in $R$.
In order to bound $I_1$, we first point out that the factor $\varepsilon$ in 
$\varepsilon\Q\Delta^n\n(a\text{div}z)$ is essential, indeed it implies thanks to \eqref{mainderiv}
$$
\varepsilon\Q\Delta^n\n(a\text{div}z)=\n(a\Delta^n\text{div}z+
2n\n a\cdot \Delta^{n-1}\n \text{div}z)+\mathcal{R},\ 
\|\mathcal{R}\|_2\leq C(\alpha) \|z\|_{W^{1,\infty}}\|z\|_{H^{2n}},
$$
We now bound $I_1$:
\begin{eqnarray*}
I_1&=&\text{Im}
\int_{\R^d}\varphi_n\left(\n \Delta^n z\cdot w+\varepsilon \n\left(a\Delta^n\text{div}z+2n\n a\cdot \n\Delta^{n-1}\text{div}z\right)\right)\Q\Delta^n\overline{z}\,dx+R
\\
&=&\text{Im}
\int_{\R^d}\varphi_n\left(\n \Delta^n z\cdot w)\cdot \Q\Delta^n\overline{z}
-\varepsilon \left(a\Delta^n\text{div}z+2n\n a\cdot \Q \Delta^{n}z\right)\right)\text{div}(\varphi_n\Q\Delta^n\overline{z})\,dx+R
\\
&=&\text{Im}
\int_{\R^d}-\varphi_n\left(\Delta^n z\cdot w\right) \text{div}\Q\Delta^n\overline{z}
+\varepsilon \text{div}\Q \Delta^n z(-a\n \varphi_n+2n\varphi_n\n a)\cdot \Q\Delta^n\overline{z}\,dx.
\end{eqnarray*}
We use $-a\varphi_n'+2n\varphi_na'=-a(a^{2n}+2n\rho a^{2n-1}a')+2n\rho a^{2n}a'
=-\varphi_n\sqrt{K/\rho}$ to obtain
\begin{eqnarray*}
I_1&=&\text{Im}
\int_{\R^d}-\varphi_n\left(\Delta^n z\cdot w\right) \text{div}\Q\Delta^n\overline{z}
-\varphi_n(\text{div}\Q\Delta^n z)
(w\cdot \Q\Delta^n\overline{z})\,dx+R
\\
&=&\text{Im}
\int_{\R^d}-\varphi_n(\MP \Delta^nz\cdot w)\text{div}(\Q\Delta^n\overline{z})
\,dx+R.
\end{eqnarray*}
This contains a loss of derivatives, which will be cancelled later thanks 
to the derivative of the solenoidal term in the energy.
\paragraph{Compensation of $I_2$} Note that without further 
computation the term $I_2$ is already without loss of derivatives, but with a loss in 
$\varepsilon$. Using $\text{div}(\rho u)=\rho\text{div}(\Q u)+u\cdot \n \rho$ we find
\begin{eqnarray*}
I_2+\frac{d}{2dt}\int_{\R d}\psi_n|\Delta^n(\rho-1)|^2dx&=& 
\int -\varphi_n g'\Delta^n\n \rho\cdot \Q\Delta^nu
-\psi_n\Delta^n\rho \Delta^n(u\cdot \n \rho +\rho \text{div}\Q u)dx
\\
&=&
\int -\varphi_n g'\Delta^n\n \rho\cdot \Q\Delta^nu
+\psi_n\n\Delta^n\rho \cdot (\Q\Delta^nu)\rho\\
&&\hspace{44mm}-\psi_nu\cdot \n \Delta^n\rho\Delta^n\rho\,dx+R
\\
&=&\int (-\varphi_ng'+\psi_n\rho)\Delta^n\n \rho\cdot \Q\Delta^nu+\text{div}(\psi_n u)
\frac{|\Delta^n\rho|^2}{2}\,dx+R
\\
&=&R,
\end{eqnarray*}
indeed thanks to definition \eqref{weights}, we have $-\varphi_ng'+\psi_n\rho=0$.
\\
To summarize, 
\begin{equation}
\label{estimD1}
\frac{d}{2dt}\int_{\R^d}\varphi_n|\Q\Delta^nz|^2+\psi_n|\Delta^n(\rho-1)|^2dx=\text{Im}
\int_{\R^d}-\varphi_n(\MP \Delta^nz\cdot w)\text{div}(\Q\Delta^n\overline{z})
\,dx+R.
\end{equation}
\paragraph{Computation of $D_2=\frac{d}{2dt}\int_{\R^d}|\MP(\theta_n\Delta^nz)|^2dx$.} We use 
$\MP\n =0$,
\begin{eqnarray*}
D_2&=&-\text{Re}\int_{\R^d}\MP\left(\theta_n\Delta^n(u\cdot \n z+i\n z\cdot w+i\varepsilon 
\n( a\text{div}z)+\n g)\right)\cdot  \MP(\theta_n\Delta^n\overline{z})\,dx
\\
&=&-\text{Re}\int_{\R^d}\bigg(u\cdot \n (\MP\theta_n\Delta^n z)
+i\varepsilon\theta_n
\n(2n\n a\cdot \Delta^n\Q z+a\Delta^n\text{div}\Q z)\\
&&\hspace{8.5cm}+\theta_ng'\Delta^n\n \rho\bigg)
\cdot  \MP(\theta_n\Delta^n\overline{z})\,dx.
\end{eqnarray*}
With the usual integration by parts, we find
$$
\text{Re}\int_{\R^d}(u\cdot \n (\theta_n\Delta^nz))\cdot \MP\theta_n \Delta^n zdx=R,
$$
$$
\int_{\R^d}(\theta_n\n\Delta^nz\cdot w)\cdot \MP(\theta_n\Delta^nz)dx=
-\int_{\R^d}(\theta_n\Delta^nz\cdot w)\text{div}\MP(\theta_n\Delta^nz)dx+R=R.
$$
We deduce 
\begin{eqnarray*}
\nonumber
D_2&=&\text{Im}
\int_{\R^d}\varepsilon\theta_n\n  (2n\n a\cdot \Delta^n\Q z+a\Delta^n\text{div}\Q z)\cdot 
\MP(\theta_n\Delta^n\overline{z})dx+R
\\
&=&-\int_{\R^d}\varepsilon(2n\n a\cdot \Delta^n\Q z+a\Delta^n\text{div}\Q z)\n\theta_n\cdot 
\MP(\theta_n\Delta^n\overline{z})dx+R
\\
&=&-\int_{\R^d}\varepsilon(a\text{div}\Q \Delta^nz)\theta_n'\n \rho\cdot \MP(\theta_n\Delta^n\overline{z})dx +R.
\end{eqnarray*}
We can replace $\MP\theta_n\Delta^nz$ by $\theta_n\MP\Delta^n z$ up to terms of order $2n-1$, which 
are then absorbed in $R$ with an integration by parts, 
this leads to 
\begin{equation}
D_2=-\int_{\R^d}\varepsilon(a\text{div}\Q \Delta^nz)\theta_n\theta_n'\n \rho\cdot \MP(\Delta^n\overline{z})dx +R.
\label{estimD2}
\end{equation}
\paragraph{Conclusion} Using \eqref{estimD1} and\eqref{estimD2}we obtain 
\begin{eqnarray*}
 \frac{dE_n}{dt}&=&
 \text{Im}
\int_{\R^d}-\varphi_n(\MP \Delta^nz\cdot w)\text{div}(\Q\Delta^n\overline{z})
+a\sqrt{\frac{\rho}{K}}\theta_n\theta_n'(\MP\Delta^nz\cdot w)
\text{div}\Q \Delta^n\overline{z}\,dx+R,
\\
&=& R,
\end{eqnarray*}
indeed the definition of $\theta_n$ ensures $\theta_n\theta_n'a\sqrt{\rho/K}-\varphi_n=0$.
\end{proof}
\begin{rmq}
Note that in the limit $\varepsilon\to 0$, we recover the usual time of existence for 
quasi-linear hyperbolic equations, with blow up criterion on $\|(\rho-1,u)\|_{W^{1,\infty}}$.
\end{rmq}

\subsection{Difference estimates}
Consider a smooth approximate solution $(\rho_1,u_1)$, $\rho_1$ bounded away from $0$, satisfying 
\begin{equation}\label{approx}
 \left\{
 \begin{array}{ll}
  \partial_t\rho_1+\text{div}(\rho_1 u_1)=e_1,\\
  \partial_t u_1+u_1\cdot \nabla u_1+\nabla g(\rho_1)=\varepsilon^2
  \nabla\left(K(\rho_1)\Delta\rho_1+\frac{1}{2}K'(\rho_1)|\nabla \rho_1|^2\right)+e_2,
 \end{array}
\right.
\end{equation}
$(e_1,e_2)$ some functions assumed to be smooth , say $H^\infty$. \\
Set generically $f_1$ for a function evaluated at $\rho_1,u_1$, in particular as previously 
$l_1=l(\rho_1)$
\begin{equation}
 \left\{
 \begin{array}{ll}
\di  \partial_tl_1+u_1\cdot w_1+\varepsilon a_1\text{div} u_1=\varepsilon\sqrt{\frac{K_1}{\rho_1}}e_1,\\
\di  \partial_t u_1+u_1\cdot \nabla u_1+\nabla g(\rho_1)=\varepsilon\nabla (a_1\text{div}w_1)
+e_2,
 \end{array}
\right.
\end{equation}
Set also generically $\widetilde{f}=f-f_1$, in particular $\tu=u-u_1$, $\tr=\rho-\rho_1$, $\widetilde{w}
=w-w_1$. The  equation on $z_1$ is 
\begin{equation}
  \partial_tz_1+u_1\cdot \nabla z_1+i\nabla z_1\cdot w_1+g'_1\n \rho_1+i\varepsilon 
  \nabla (a_1\text{div}z_1)=e_3.
\end{equation}
where $e_3=e_2+i\varepsilon \nabla \left(\sqrt{\frac{K_1}{\rho_1}}e_1\right)$
the difference equations on $\tr,\widetilde{z}$ are
\begin{eqnarray}\label{diffeq}
\left\{
\begin{array}{lll}
\partial_t\tr+\text{div}(\rho\tu +\tr u_1)&=&-e_1,\\
\partial_t \tz+u\cdot \nabla \tz+i\n\tz\cdot w
+g'\n \tr+i\varepsilon \nabla (a\text{div}\tz)&=&
-\tu\cdot \n z_1-i\n z_1\cdot \tw-\widetilde{g'}\n r_1
\\
&&+i\varepsilon\n(\ta\text{div}z_1)-e_3.
\end{array}
\right.
\end{eqnarray}
In the same spirit as the previous section, we define the energies
$$
\widetilde{E_n}=\frac{1}{2}\int \rho a^{2n}|\Q\Delta ^n\tz|^2+|\MP(\theta_n\Delta^n\tz)|^2
+a^{2n}g'(\rho)|\Delta^n\tr|^2dx.
$$
The analog of lemma \ref{equivnorme} is true :
\begin{lemma}\label{equivnorme2}
 Assume there exists some $\alpha>0$ such that $\text{Im}(\rho )\subset I\subset [\alpha,1/\alpha[$, 
 $\|\rho\|_{W^{1,\infty}}\leq \alpha$ and
 $\forall\, a \in I,\ g'(a)\geq \alpha>0$. Choose $\theta_n$ such that $\theta_n|_{I}\geq \alpha$, 
 then there exists $c_\alpha,C_\alpha>0$ such that
 $$
c_\alpha(\|\tz\|_{H^{2n}}^2+\|\tr\|_{H^{2n}})^2\leq  E_0+E_n\leq C_\alpha
(\|\tz\|_{H^{2n}}^2+\|\tr\|_{H^{2n}})^2,
 $$
\end{lemma}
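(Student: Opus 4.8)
The plan is to exploit the fact that $\widetilde{E_n}$ has \emph{exactly} the same algebraic structure as the energy $E_n$ of Lemma~\ref{equivnorme}, the only change being that the arguments $z$ and $\rho-1$ are replaced by $\tz$ and $\tr$. Indeed the three weights $\rho a^{2n}$, $\theta_n$ and $a^{2n}g'(\rho)$ are functions of the exact density $\rho$ \emph{alone}, hence identical in both energies; under the hypotheses $\text{Im}(\rho)\subset I\subset[\alpha,1/\alpha[$, $g'|_I\geq\alpha$ and $\theta_n|_I\geq\alpha$, they are all bounded above and below by positive constants depending only on $\alpha$ and $n$. Consequently the proof of Lemma~\ref{equivnorme} applies \emph{verbatim} after the substitution $(z,\rho-1)\mapsto(\tz,\tr)$. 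For completeness I would sketch the argument, using throughout the equivalence $\|f\|_{H^{2n}}^2\sim\|f\|_2^2+\|\Delta^n f\|_2^2$ recalled in Section~\ref{secfunctional}.

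For the upper bound I would dominate each weight by its supremum and use the $L^2$-orthogonality of $\Q$ and $\MP$, which gives $\|\Q\Delta^n\tz\|_2^2+\|\MP\Delta^n\tz\|_2^2=\|\Delta^n\tz\|_2^2$. The zeroth order energy $\widetilde{E_0}$ (whose weights $\rho$ and $g'(\rho)$ are bounded) supplies the $L^2$ part of the norms of $\tz$ and $\tr$, and together with the $\Delta^n$ terms this yields $\widetilde{E_0}+\widetilde{E_n}\leq C_\alpha(\|\tz\|_{H^{2n}}^2+\|\tr\|_{H^{2n}}^2)$.

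The lower bound is the delicate direction. From the first and third terms of $\widetilde{E_n}$ one controls $\|\Q\Delta^n\tz\|_2$ and $\|\Delta^n\tr\|_2$ directly, since $\rho a^{2n}$ and $a^{2n}g'(\rho)$ are bounded below, and $\widetilde{E_0}$ controls $\|\tz\|_2$ and $\|\tr\|_2$. The one subtlety is to recover $\|\MP\Delta^n\tz\|_2$ from $\|\MP(\theta_n\Delta^n\tz)\|_2$, because $\MP$ does not commute with multiplication by $\theta_n=\theta_n(\rho)$. I would write
$$
\theta_n\,\MP\Delta^n\tz=\MP(\theta_n\Delta^n\tz)-[\MP,\theta_n]\Delta^n\tz,
$$
and use $\theta_n\geq\alpha$ to get $\alpha\|\MP\Delta^n\tz\|_2\leq\|\MP(\theta_n\Delta^n\tz)\|_2+\|[\MP,\theta_n]\Delta^n\tz\|_2$. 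Since $[\MP,\theta_n]=-[\Delta^{-1}\n\,\text{div},\theta_n]$ gains one derivative (it is of order $-1$, by a Calderón-type commutator estimate), one has $\|[\MP,\theta_n]\Delta^n\tz\|_2\lesssim\|\rho\|_{W^{1,\infty}}\|\tz\|_{H^{2n-1}}$, and this is the only point where the smoothness of $\rho$ enters.

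Finally I would close by interpolation and absorption: the remainder sits at order $2n-1$, strictly below $2n$, so $\|\tz\|_{H^{2n-1}}\leq\eta\|\tz\|_{H^{2n}}+C_\eta\|\tz\|_2$ for any $\eta>0$. Expanding $\|\tz\|_{H^{2n}}^2\sim\|\tz\|_2^2+\|\Q\Delta^n\tz\|_2^2+\|\MP\Delta^n\tz\|_2^2$, the piece $\eta\|\MP\Delta^n\tz\|_2$ produced by the commutator is absorbed into the left-hand side once $\eta$ is small, while $\|\Q\Delta^n\tz\|_2$ and $\|\tz\|_2$ are already controlled by $\widetilde{E_n}$ and $\widetilde{E_0}$. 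The same scheme, without any commutator, handles the $\tr$ part, giving $c_\alpha(\|\tz\|_{H^{2n}}^2+\|\tr\|_{H^{2n}}^2)\leq\widetilde{E_0}+\widetilde{E_n}$. The main obstacle, and the only genuinely nontrivial ingredient, is the order $-1$ bound on $[\MP,\theta_n]$ together with this absorption step; everything else follows from the boundedness of the weights and from $L^2$-orthogonality.
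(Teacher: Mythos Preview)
Your proposal is correct and matches the paper's approach exactly: the paper simply states that Lemma~\ref{equivnorme2} is ``the analog of lemma~\ref{equivnorme}'' and gives no separate proof, since the weights $\rho a^{2n}$, $\theta_n$, $a^{2n}g'(\rho)$ depend only on the exact density $\rho$ and hence the argument of Lemma~\ref{equivnorme} (itself deferred to \cite{Audiard9}) applies verbatim after substituting $(\tz,\tr)$ for $(z,\rho-1)$. Your sketch of that argument, including the commutator/absorption step for the solenoidal part, is a correct fleshing-out of what the paper leaves implicit.
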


\begin{prop}\label{estimerreur}Let $2n>d/2+1$, $(\rho-1,u)\in C_TH(H^{2n+1}\times H^{2n})$ given by the first 
point of proposition 
\ref{propenergie}, $(\rho_1,u_1)$ an approximate solution in $C_T(H^{2n+3}\times H^{2n+2})$, 
$\rho_1\geq \alpha$, then for $k\leq n$
\begin{eqnarray}\label{energiediff}
\frac{d\widetilde{E_n}}{dt}&\leq& C(\|\tz,z_1,\varepsilon z_1\|_{H^{2n}\times H^{2n+1}\times H^{2n+2}}+\|\tr\|_{H^{2n}}+
\|r_1\|_{H^{2n}},\alpha)
\|(\|\tz\|_{H^{2n}}+\|\tr\|_{H^{2n}})^2
\nonumber
\\
&& +\|e_2\|_{H^{2n}}^2+\|(e_1,\varepsilon \n e_1)\|_{(H^{2n})^2}^2.
\end{eqnarray}
 \end{prop}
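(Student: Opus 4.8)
The plan is to differentiate $\widetilde{E_n}$ and reproduce, term by term, the computation of Proposition \ref{propenergie}, the point being that the principal (left-hand) part of the $\tz$-equation in \eqref{diffeq}, namely $\partial_t\tz+u\cdot\n\tz+i\n\tz\cdot w+g'\n\tr+i\varepsilon\n(a\,\text{div}\tz)$, has \emph{exactly} the structure of \eqref{EKz} with $z$ replaced by $\tz$ and $\rho-1$ by $\tr$, all coefficients $u,w,g',a$ being evaluated at the \emph{exact} solution; moreover the weights $\varphi_n,\psi_n,\theta_n$ of $\widetilde{E_n}$ are the same as in $E_n$, again at the exact $\rho$. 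Consequently every cancellation of Proposition \ref{propenergie} survives for the principal part: the analogue of $D_1$ produces the loss-of-derivatives term $\text{Im}\int-\varphi_n(\MP\Delta^n\tz\cdot w)\,\text{div}(\Q\Delta^n\overline{\tz})\,dx$ together with an $I_2$-type term, the latter being compensated by $\frac{d}{2dt}\int\psi_n|\Delta^n\tr|^2$ thanks to $-\varphi_ng'+\psi_n\rho=0$, while the analogue of $D_2$ cancels the loss-of-derivatives term thanks to $\theta_n\theta_n'a\sqrt{\rho/K}-\varphi_n=0$. I would carry out these steps first and record that, up to terms absorbable in $R$, the only genuinely new contributions to $\frac{d\widetilde{E_n}}{dt}$ come from the right-hand sides of \eqref{diffeq}.

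Two adjustments are needed when transporting the argument. First, the density equation is now $\partial_t\tr+\text{div}(\rho\tu+\tr u_1)=-e_1$; writing $\text{div}(\rho\tu)=\rho\,\text{div}\Q\tu+\n\rho\cdot\tu$ and $\text{div}(\tr u_1)=u_1\cdot\n\tr+\tr\,\text{div}u_1$, the leading term $-\rho\,\text{div}\Q\tu$ is precisely the one compensating $I_2$ as above, whereas $u_1\cdot\n\tr$ integrates by parts into $R$ and the remaining $\n\rho\cdot\tu$, $\tr\,\text{div}u_1$ are of lower order and bounded by the right-hand side of \eqref{energiediff}. Second, the forcing $-e_1$ contributes $\int\psi_n\Delta^n\tr\,\Delta^n(-e_1)\lesssim\|\tr\|_{H^{2n}}^2+\|e_1\|_{H^{2n}}^2$.

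It then remains to estimate the source terms of the $\tz$-equation, $-\tu\cdot\n z_1-i\n z_1\cdot\tw-\widetilde{g'}\n r_1+i\varepsilon\n(\ta\,\text{div}z_1)-e_3$, paired in $L^2$ against $\varphi_n\Q\Delta^n\overline{\tz}$ and $\theta_n^2\MP\Delta^n\overline{\tz}$. For the first three I would use the Gagliardo--Nirenberg and composition rules of Proposition \ref{propsobolev}, together with the difference-of-composition bounds $\|\widetilde{g'}\|_{H^{2n}}+\|\ta\|_{H^{2n}}\lesssim C\|\tr\|_{H^{2n}}$ (obtained by writing $f(\rho)-f(\rho_1)=\tr\int_0^1 f'(\rho_1+s\tr)\,ds$) and $\|\tw\|_{H^{2n}}\le\|\tz\|_{H^{2n}}$. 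Each such term is then bounded by $\|\tz\|_{H^{2n}}\,\|z_1\|_{H^{2n+1}}(\|\tz\|_{H^{2n}}+\|\tr\|_{H^{2n}})$ or $\|\tz\|_{H^{2n}}\,\|r_1\|_{H^{2n}}\|\tr\|_{H^{2n}}$, i.e.\ is quadratic in the difference with a coefficient in the norms of the approximate solution; the term $e_3=e_2+i\varepsilon\n(\sqrt{K_1/\rho_1}\,e_1)$ produces the error contributions $\|e_2\|_{H^{2n}}^2+\|(e_1,\varepsilon\n e_1)\|_{(H^{2n})^2}^2$.

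The main obstacle is the dispersive source $i\varepsilon\n(\ta\,\text{div}z_1)$, which a priori loses one derivative: pairing with $\Q\Delta^n\overline{\tz}$ leads to $\varepsilon\|\ta\,\text{div}z_1\|_{H^{2n+1}}\,\|\tz\|_{H^{2n}}$, and by Proposition \ref{propsobolev} the first factor splits as $\varepsilon\|\ta\|_\infty\|\text{div}z_1\|_{H^{2n+1}}+\varepsilon\|\ta\|_{H^{2n+1}}\|\text{div}z_1\|_\infty$. The first piece is harmless, being $\lesssim\|\tr\|_{H^{2n}}\|\varepsilon z_1\|_{H^{2n+2}}$ — this is exactly why the weight $\|\varepsilon z_1\|_{H^{2n+2}}$ appears in \eqref{energiediff}, the extra derivative falling on the smooth $z_1$ and being paid by $\varepsilon$. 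The second piece involves $\varepsilon\|\ta\|_{H^{2n+1}}$, hence $\varepsilon\|\tr\|_{H^{2n+1}}$, which is \emph{not} controlled by $\widetilde{E_n}$; the resolution is that $\varepsilon\n\tr$ is tamed by the defining relation $w=\varepsilon\sqrt{K/\rho}\,\n\rho$: writing $\tw=\varepsilon\sqrt{K/\rho}\,\n\tr+\varepsilon(\sqrt{K/\rho}-\sqrt{K/\rho_1})\n\rho_1$ gives $\varepsilon\|\n\tr\|_{H^{2n}}\lesssim\|\tw\|_{H^{2n}}+\varepsilon\,C\|\tr\|_{H^{2n}}\lesssim\|\tz\|_{H^{2n}}+\|\tr\|_{H^{2n}}$, so the second piece is bounded by $(\|\tz\|_{H^{2n}}+\|\tr\|_{H^{2n}})\|z_1\|_{H^{2n+1}}$. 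Both pieces are therefore quadratic in the difference with coefficients among $\|z_1\|_{H^{2n+1}}$, $\|\varepsilon z_1\|_{H^{2n+2}}$, $\|\tr\|_{H^{2n}}$, which is the content of \eqref{energiediff}. Finally one checks, exactly as for the principal part, that the corresponding dispersive-source contribution produced by $D_2$ combines harmlessly with the above; this is where most of the bookkeeping lies, but it is structurally identical to the principal-part cancellation and introduces no new difficulty.
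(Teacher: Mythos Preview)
Your proposal is correct and follows essentially the same approach as the paper, which sketches (restricting for conciseness to the irrotational case $\Q z=z$) that the principal-part cancellations of Proposition~\ref{propenergie} carry over verbatim and that the right-hand sides of \eqref{diffeq} are absorbed into $R$. You are in fact more explicit than the paper on the dispersive source $i\varepsilon\n(\ta\,\text{div}z_1)$: the paper records only the bound $\|\tr\|_{L^\infty}\|\varepsilon z_1\|_{H^{2n+2}}\|\tz\|_{H^{2n}}$, while your observation that $\varepsilon\n\tr$ is controlled through $\tw$ to handle the complementary piece $\varepsilon\|\ta\|_{H^{2n+1}}\|\text{div}z_1\|_\infty$ is a detail the paper's sketch glosses over.
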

\begin{proof}  
This is a rather straightforward modification of the proof of estimate \eqref{energie}.
For conciseness we only sketch the computations for the irrotational case, $\Q z=z$ : performing similar computations 
as for energy estimates, we obtain, with 
$R$ a term that is controlled by the right hand side of \eqref{energiediff}:
\begin{eqnarray*}
\frac{d\widetilde{E_n}}{dt}&=&R-\text{Re}\int \varphi_n\Delta^n\left(\tu \cdot\n z_1+i\n z_1\cdot \tw+\widetilde{g'}\n r_1
+i\varepsilon\nabla(\ta \text{div}z_1)-e_3\right)\overline{\Delta^n\tz}
\\
&&-\int a^{2n}\rho g'\nabla \Delta^n\tr \cdot \Delta^n\tu-
\int a^{2n}g'\Delta^n\tr \Delta^n\left(\text{div}(\rho \tu +\tr u_1)-e_1\right).
\end{eqnarray*}
A first observation is that all terms of the first line can easily be absorbed in $R$, 
for example $\text{Re}\int i\varepsilon \varphi_n\Delta^n \n(\widetilde{a}\text{div}z_1)\cdot \widetilde{z}$
is roughly bounded by $C\|\tr\|_{L^\infty}\|\varepsilon z_1\|_{H^{2n+2}}\|z\|_{H^{2n}}$.
\\
For the second line, using  
$\Delta^n\tr\n\Delta^n\tr=\frac{1}{2}\nabla (\Delta^n\tr)^2$ and integration by parts, we find
\begin{eqnarray*}
 \int a^{2n}\rho g'\nabla \Delta^n\tr \cdot \Delta^n\tu
+a^{2n}g'\Delta^n\tr \Delta^n\text{div}(\rho \tu +\tr u_1)
&=&R+\int a^{2n}\rho g'\n\Delta^n\tr\cdot \tu \Delta^n\rho\\
&&+a^{2n}g'\Delta^n\tr \left(u_1\cdot \nabla \Delta^n\tr
+(\Delta^n\text{div}u_1) \tr \right)
\\
&=&R+\int a^{2n}\rho g'\nabla \Delta^n\tr\cdot \tu \Delta^nr_1,
\end{eqnarray*}
once again this last term is taken care of with an integration by part.
\end{proof}

\subsection{BKW analysis and convergence}\label{subsecBKW}
This part can be done exactly as in previous works on the Schr\"odinger equation (see for example 
\cite{chironrousset} section $3.2$, or \cite{Grenier}), so we only recall the basic facts. \\
Write formally $r=\rho-1,\ \rho=1+r^0+\sum_1^\infty \varepsilon^kr^k$, $u=\sum_0^\infty \varepsilon^ku^k$, and
plug this ansatz in \eqref{EK}. We obtain  at rank $0$ and $1$ 
\begin{equation}\label{euler}
\left\{
\begin{array}{ll}
 \partial_t\rho^0+\text{div}(\rho^0 u^0)=0,\\
 \partial_tu^0+u^0\cdot \n u^0+\n \left(g(\rho^0)\right)=0,
\end{array}
\right.
\left\{
\begin{array}{ll}
 \partial_t r^1+\text{div}(r^1 u^0+\rho^0u_1)=0,\\
 \partial_tu^1+u^1\cdot \n u^0+u^0\cdot \nabla u^1+\n \left(g'(\rho^0)r^1\right)=0,
\end{array}
\right. 
\end{equation}
and generically the equation  or rank $k$ is the linearization of the equation at order $0$ with some 
source terms depending on the lower order terms $(r^j,u^j)_{0\leq j\leq k-1}$
\begin{equation}\label{lineuler}
\left\{
\begin{array}{ll}
\partial_t r^k+\text{div}(r^ku_0+\rho^0u^k)=f^k_1,\\
\partial_tu^k+u^0\cdot\n u^k+u^k\cdot \n u^0+\n \left(g'(\rho^0)r^k\right)=f^k_2.
\end{array}
\right.
\end{equation}
More precisely, $\di f_1^k=-\text{div}\left(\sum_{0<j,l,\ j+l=k} r^ju^l\right)$ involves derivatives of order at most $1$ of terms $(r^j,u^j)_{j\leq k-1}$, while $f_2^k$ involves similar terms
\underline{and} derivatives
up to order $3$ of terms $(r^j)_{0\leq j\leq k-2}$. It is less easy to write, but 
the main term for counting loss of derivatives is clearly $K(\rho^0)\nabla\Delta r^{k-2}$.
\\
Unsurprisingly, the system of rank $0$ is the Euler equations, that are well known to be symmetrizable (with symmetrizer $\text{diag}(g'(\rho^0)/\rho^0,1\cdots,1)$), higher order 
equations are the linearization of the Euler equations near $(\rho^0,u^0)$, with forcing terms.
The following result of well-posedness for symmetrizable hyperbolic systems is 
standard (\cite{Benzoni3}, \cite{BCD} theorem 4.15):
\begin{theo}\label{lwphyp}
 For initial data $(\rho_0^0-1,u_0^0)\in H^n,\ n>d/2+1$, 
 $\text{Im}(\rho^0)\subset I\subset [2\alpha,\infty[$, with $g'|_I\geq 2\alpha>0$. 
 There exists a time $T(\rho_0^0,u_0^0)$ such that system \eqref{euler} has a unique solution 
 in $\cap_{j=0}^n C^j([0,T],H^{n-j})$, with $\inf_{[0,T]\times \R^d}\min(\rho^0,\,g'(\rho^0))\geq \alpha$.
 \\
 For any $k\geq 1$, data $(r_0^k,u_0^k)\in H^n(\R^d)$, $n> p > d/2+1$ and forcing terms 
 $(f_1^k,f_2^k)\in H^p([0,T]\times \R^d)$, the system \eqref{lineuler} has a unique solution in $\cap_{j=0}^p
 C^j([0,T],H^{p-j})$, with $T$ the time of existence of $(\rho^0,u^0)$, it satisfies
 $$
 \|(r^k,u^k)\|_{\cap_{j=0}^p C^j([0,T],H^{p-j})}\leq C(\|(r^0_0,u^0_0)\|_{H^p},\alpha)\left(
 \|(r^k_0,u^k_0)\|_{H^n}+\|(f_1^k,f_2^k)\|_{\cap_{j=0}^p C^jH^{p-j}}\right).
 $$
\end{theo}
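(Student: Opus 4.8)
The plan is to treat both parts by the classical energy method for Friedrichs-symmetrizable quasilinear systems, the only problem-specific input being the explicit symmetrizer. Writing $W=(\rho^0-1,u^0)$ and putting \eqref{euler} in quasilinear form $\partial_tW+\sum_{j=1}^dA_j(\rho^0,u^0)\partial_jW=0$ (using $\partial_j\rho^0=\partial_j(\rho^0-1)$), one computes
$$
A_j(\rho^0,u^0)=\begin{pmatrix} u^0_j & \rho^0\,e_j^T\\ g'(\rho^0)\,e_j & u^0_j\,I_d\end{pmatrix},
$$
where $e_j$ is the $j$-th canonical basis vector of $\R^d$. The announced symmetrizer $S(\rho^0)=\mathrm{diag}(g'(\rho^0)/\rho^0,I_d)$ then yields
$$
S(\rho^0)A_j(\rho^0,u^0)=\begin{pmatrix} (g'/\rho^0)u^0_j & g'\,e_j^T\\ g'\,e_j & u^0_j\,I_d\end{pmatrix},
$$
which is symmetric, and under the hypotheses $\rho^0\geq 2\alpha$, $g'\geq 2\alpha$ on $I$ one has $S(\rho^0)\geq c(\alpha)\,\mathrm{Id}>0$. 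This is exactly the Friedrichs-symmetrizability required to invoke \cite{Benzoni3} or Theorem 4.15 of \cite{BCD}.

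For the nonlinear part I would run the standard iteration scheme, defining $W^{(m+1)}$ as the solution of the \emph{linear} system obtained by freezing coefficients at $W^{(m)}$, and closing it by the basic $H^n$ energy estimate. Differentiating the symmetrized equation by $\partial^\alpha$, $|\alpha|\leq n$, pairing with $S(\rho^0)\partial^\alpha W$ and integrating by parts, the top-order commutators $[\partial^\alpha,A_j(\rho^0,u^0)]\partial_jW$ are controlled by the Gagliardo--Nirenberg / Moser estimates of Proposition \ref{propsobolev}, giving
$$
\frac{d}{dt}\|W\|_{H^n}^2\leq C\big(\|W\|_{H^n}\big)\,\|W\|_{H^n}^2.
$$
A bootstrap produces a time $T=T(\rho_0^0,u_0^0)$ and a uniform $H^n$ bound, while contraction of the scheme in $C_TL^2$ gives existence and uniqueness (the latter also directly from the $L^2$ estimate on a difference of solutions). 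The lower bounds $\rho^0,g'(\rho^0)\geq\alpha$, valid at $t=0$ with the better constant $2\alpha$, propagate on a possibly shorter interval by continuity of $t\mapsto\|W(t)-W_0\|_\infty$ (controlled by the $H^n$ bound since $n>d/2+1$), after shrinking $T$. The time regularity $\cap_{j=0}^nC^j([0,T],H^{n-j})$ is then read off inductively from the equation.

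The linear part is simpler, the coefficients now being the \emph{fixed} functions $(\rho^0,u^0)\in C_TH^n$ from the first part, so that \eqref{lineuler} reads $\partial_tV+\sum_jA_j(\rho^0,u^0)\partial_jV+B\,V=F$ with $V=(r^k,u^k)$, $F=(f_1^k,f_2^k)$ and $B=B(\rho^0,u^0,\n\rho^0,\n u^0)$ of order zero. Using the same symmetrizer $S(\rho^0)$, and noting that $n>p>d/2+1$ makes the coefficients smooth enough to handle the $H^p$ commutators, the energy estimate takes the \emph{tame} form
$$
\frac{d}{dt}\|V\|_{H^p}^2\leq C\big(\|(\rho^0,u^0)\|_{C_TH^p},\alpha\big)\big(\|V\|_{H^p}^2+\|F\|_{H^p}^2\big),
$$
and Gronwall over the interval $[0,T]$ on which the coefficients are defined gives the announced bound, after estimating $\|(\rho^0,u^0)\|_{C_TH^p}$ by $C(\|(\rho_0^0,u_0^0)\|_{H^p},\alpha)$ through the first part run at regularity $p$. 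Existence of $V$ at this level is obtained by the standard linear machinery (vanishing viscosity or Galerkin) built on the very same a priori estimate, and the time regularity $\cap_{j=0}^pC^j([0,T],H^{p-j})$ is again read off from the equation.

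The only genuinely technical point, common to both parts, is the control of the top-order commutators without loss of derivatives, which is precisely where the symmetry of $S(\rho^0)A_j$ is used to integrate by parts and where the product estimates of Proposition \ref{propsobolev} enter. I expect the main obstacle in the linear part to be purely one of bookkeeping: arranging the energy estimate in tame form so that the resulting constant depends only on the lower norm $\|(\rho_0^0,u_0^0)\|_{H^p}$ (and on $\alpha$), with the data measured in $H^n$ and the forcing in $H^p$ appearing linearly on the right-hand side.
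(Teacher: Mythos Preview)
Your sketch is correct and follows exactly the classical route. Note, however, that the paper does not actually prove this theorem: it is stated as a standard result for symmetrizable hyperbolic systems and simply cited from \cite{Benzoni3} and \cite{BCD} (Theorem~4.15). Your explicit symmetrizer $S(\rho^0)=\mathrm{diag}(g'(\rho^0)/\rho^0,I_d)$ is precisely the one the paper mentions in passing just before the statement, and the energy/iteration scheme you outline is what those references contain. So there is no discrepancy in approach---you have supplied the argument that the paper delegates to the literature.
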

As a consequence of this result and composition/product rules in Sobolev spaces, we may now 
state a precise version of proposition \ref{existapprox1}:
\begin{coro}\label{existapprox}
Let $N\in \N$. For $n>d/2+1+[3N/2]$, $[\cdot]$ the integer part, 
data $(r_0^k,u_0^k)\in H^{n_k}$, $0\leq k\leq N$, 
with $n_k=n-[3k/2]$, $0\leq k\leq N$, 
there exists solutions of the 
systems \eqref{euler},\eqref{lineuler} up to order $N$, with 
$(r^k,u^k)\in \cap_{j=0}^{n_k}C^j([0,T], H^{n_k-j})$.\\
If moreover $n>d/2+3+[3N/2]$, there exists $\varepsilon_0>0$ such that for $0<\varepsilon
<\varepsilon_0$ the function $(\rho^{\text{app}},u^{\text{app}}):=(\rho^0,u^0)+\sum_1^N \varepsilon^k(r^k,u^k)$ is an 
approximate solution of the Euler-Korteweg system as in \eqref{approx} with 
\begin{eqnarray}
\|(e_1,e_2)\|_{\di C_T(H^{n_N-1}\times H^{n_N-3})}=O(\varepsilon^{N+1}),\\
\label{borneinfroa}\inf_{(x,t)\in \R^d\times [0,T]}\min(g'(\rho^{\text{app}}),\rho^{\text{app}}))\geq \alpha/2.
\end{eqnarray}
\end{coro}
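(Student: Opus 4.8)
The plan is to construct the profiles $(r^k,u^k)$ one rank at a time by induction on $k$, invoking the hyperbolic well-posedness of Theorem \ref{lwphyp} at each step, and then to assemble $(\rho^{\text{app}},u^{\text{app}})$ and estimate the residual by a direct expansion in powers of $\varepsilon$ together with the product and composition rules of Proposition \ref{propsobolev}. First I would solve the rank-$0$ system \eqref{euler}, which is exactly the symmetrizable hyperbolic Euler system: the first part of Theorem \ref{lwphyp}, applied to $(r_0^0,u_0^0)\in H^{n}$ with $\text{Im}(\rho^0_0)\subset[2\alpha,\infty[$ and $g'\geq 2\alpha$, produces $(\rho^0,u^0)$ on a time interval $[0,T]$ with $\rho^0-1\in\cap_j C^j([0,T],H^{n-j})$ and the uniform lower bound $\min(\rho^0,g'(\rho^0))\geq\alpha$ on $[0,T]\times\R^d$. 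This $T$ and this $\alpha$ are then frozen for the rest of the construction.

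Next I would run the induction $1\leq k\leq N$. Assuming $(r^j,u^j)$ constructed for $j<k$ with $r^j\in\cap_i C^i([0,T],H^{n_j-i})$, the forcings $(f_1^k,f_2^k)$ of the linearized system \eqref{lineuler} are explicit polynomial expressions in the $(r^j,u^j)_{j<k}$ and their derivatives, and the crucial point is to track their regularity. In $f_1^k$, and in the transport part of $f_2^k$, only one derivative of a lower profile occurs, which by Gagliardo--Nirenberg lands in $H^{n_{k-1}-1}\subset H^{n_k}$. The unique genuine loss of three derivatives is the capillary contribution, whose principal part is $K(\rho^0)\nabla\Delta r^{k-2}$; since $r^{k-2}\in H^{n_{k-2}}$ and the integer-part grading satisfies $[3(k-2)/2]=[3k/2]-3$, i.e.\ $n_{k-2}-3=n_k$, this term lands \emph{exactly} in $H^{n_k}$. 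Hence $(f_1^k,f_2^k)\in\cap_i C^i([0,T],H^{n_k-i})$, and feeding this into the second part of Theorem \ref{lwphyp} yields $(r^k,u^k)$ at regularity level $n_k$. The threshold $n>d/2+1+[3N/2]$ is precisely what guarantees $n_N>d/2+1$, so that every linear system in the hierarchy falls under the hypotheses of Theorem \ref{lwphyp}.

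With the profiles in hand I would set $(\rho^{\text{app}},u^{\text{app}})=(\rho^0,u^0)+\sum_{k=1}^N\varepsilon^k(r^k,u^k)$ and substitute into \eqref{EK}. Since the continuity and momentum equations are, respectively, polynomial and (through $K,g$) smooth-nonlinear in the unknowns, expanding in powers of $\varepsilon$ makes the contributions of orders $\varepsilon^0,\dots,\varepsilon^N$ cancel by the very definition of \eqref{euler}--\eqref{lineuler}, leaving residuals $(e_1,e_2)$ that collect only powers $\varepsilon^{N+1}$ and higher. The least regular among them are a product term $\text{div}(r^ju^l)$ with $j+l\geq N+1$ in $e_1$, which by the product rule sits in $H^{n_N-1}$, and the truncated capillary term $\varepsilon^2\nabla(K(\rho^{\text{app}})\Delta\rho^{\text{app}}+\tfrac12 K'|\nabla\rho^{\text{app}}|^2)$ in $e_2$, whose residual involves three derivatives of a top profile and therefore sits in $H^{n_N-3}$. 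Counting powers of $\varepsilon$ and applying the composition rule of Proposition \ref{propsobolev} to $K(\rho^{\text{app}})$ (legitimate since $n_N>d/2$ forces $\rho^{\text{app}}\in L^\infty$ with $\rho^{\text{app}}\to\rho^0$ uniformly as $\varepsilon\to 0$) gives $\|(e_1,e_2)\|_{C_T(H^{n_N-1}\times H^{n_N-3})}=O(\varepsilon^{N+1})$; this is exactly where the sharper threshold $n>d/2+3+[3N/2]$ enters, ensuring $n_N-3>d/2$ so that the nonlinear capillary residual is genuinely controlled in $H^{n_N-3}$. Finally the lower bound \eqref{borneinfroa} follows from $\min(\rho^0,g'(\rho^0))\geq\alpha$, the uniform smallness of $\sum_{k\geq1}\varepsilon^k r^k$ in $L^\infty$, and continuity of $g'$, upon choosing $\varepsilon_0$ small.

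The main obstacle I expect is the derivative-loss bookkeeping: one must certify that the \emph{only} true three-derivative loss is the capillary term and that the grading $n_k=n-[3k/2]$ absorbs it exactly at every rank, while simultaneously checking that all remaining lower-order forcings stay above this regularity floor. These are routine but numerous applications of the product and composition rules, and the integer-part arithmetic must be carried out carefully so that each linearized system in the hierarchy remains within the scope of Theorem \ref{lwphyp}.
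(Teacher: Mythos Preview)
Your proposal is correct and follows essentially the same route as the paper: inductive construction of the profiles via Theorem~\ref{lwphyp}, tracking the three-derivative loss from the capillary term $K(\rho^0)\nabla\Delta r^{k-2}$ through the grading $n_k=n-[3k/2]$, and identifying the worst residual terms as a first-order derivative of $(r^N,u^N)$ in $e_1$ and a third-order derivative of $r^N$ in $e_2$. The paper's own proof is a terse two-sentence sketch of exactly this argument, so your write-up is simply a more detailed execution of the same idea.
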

\begin{proof}
The proof is an immediate application of theorem \ref{lwphyp} and composition rules in Sobolev spaces, so we 
only underline two points. First it is necessary to choose $\varepsilon$ small enough in order to ensure 
inequality \eqref{borneinfroa}.
Second, the numerology : as pointed out previously, $(f_1^k,f_2^k)$ are functions that contain 
first order derivatives of $(r^{k-1},u^{k-1})$ and third order derivatives of $r^{k-2}$. 
Hence $\di (r^0,u^0)\in \cap_{j=0}^n C^j_TH^{n-j}$, $\di (r^1,u^1)\in 
\cap_{j=0}^{n-1} C^j_TH^{n-j-1}$,
 then as $f^k_2$ contains third order derivatives of $\rho^0$, we have 
$(r^2,u^2)\in \cap_{j=0}^{n-3} C^j_TH^{n-j-3}$,  the proof is ended by induction.\\
For the estimate of $(e_1,e_2)$, it suffices to observe that the worst terms in $e_1$ 
are derivatives of order one of $(\rho^N,u^N)$, while the worst term in $e_2$ is a derivative of 
third order of $\rho^N$.
\end{proof}
\begin{rmq}
 The result is a bit better for $N=0$, as $(\rho^0,u^0)$ is  an approximate solution of 
 order $2$ if $(r_0^1,u_0^1)=0$. This is in particular the case if the initial data is simply of 
 the form $(\rho_0,u_0)$.
\end{rmq}
Putting together corollary \ref{existapprox} and proposition \ref{estimerreur}, we can now prove 
the main result.
\begin{proof}[Proof of theorem \ref{mainthRd}]
We apply proposition \ref{estimerreur} on the difference $(\tr,\tu)=(\rho-\rho^{\text{app}},u-u^{\text{app}})$, and for 
 $0\leq n\leq (n_N-3)/2$. Thanks to the bounds on the approximate solution, we obtain 
 \begin{eqnarray*}
 \|\tilde{z}(t)\|_{H^{n_N-3}}^2+\|\tr\|_{H^{n_N-3}}^2&\leq& \int_0^tC(\|\tz\|_{H^{n_N-3}}+\|\tr\|_{H^{n_N-3}},\alpha)\left(\|\tz\|_{H^{n_N-3}}^2+\|\tr\|_{H^{n_N-3}}^2\right)(s)\,ds
\\
&&
+O(\varepsilon^{N+1}).
 \end{eqnarray*}
 Gronwall's lemma ensures that, as long as $\|\tz\|_{H^{n_N-3}}+\|\tr\|_{H^{n_N-3}}=O(1)$ and $g'(\rho),\rho$ are bounded away from $0$, we have
 \begin{equation}\label{boot}
 \|\tilde{z}(t)\|_{H^{n_N-3}}^2+\|\tr(t)\|_{H^{n_N-3}}^2\leq C(\alpha)\varepsilon^{N+1}.
 \end{equation}
For $\varepsilon$ small enough a standard bootstrap argument ensures that on $[0,T]$ the solution exists with 
$g'(\rho),\rho$ bounded away from $0$ and \eqref{boot} holds. In particular, 
 we have $\|\rho-\rho^{\text{app}}\|_{C_TH^{n_N-3}}=O(\varepsilon^{N+1})$, and $\|\tu\|_{C_TH^{n_N-3}}=
 \|\text{Re}(\tz)\|_{C_TH^{n_N-3}}=O(\varepsilon^{N+1})$.
\end{proof}

\section{Analysis on the half space}\label{sechalfspace}
The case of the half space is more intricate. Even for $\varepsilon=1$, there are no 
well-posedness results for the boundary value problem for the Euler-Korteweg system. Our aim 
in this section is to initiate the analysis of the problem, by first deriving a priori estimates, 
and then performing a formal BKW expansion of the -hypothetical solution- to give an  
intuition of the effect of a boundary in the limit $\varepsilon\to 0$. We restrict 
the analysis to the case of the quantum Euler equation, that is $K(\rho)=1/\rho$.
\paragraph{A reminder on compatibilty conditions} For 
boundary value problems, this is most easily done in general abstract setting : consider
a problem of the form 
$$
\left\{
\begin{array}{ll}
\partial_tU=F(U),\\
CU|_{x_d=0}=0,\\
U|_{t=0}=U_0,
\end{array}\right.
$$
where $F$ is a smooth function of $U$ and its space derivatives. $C$ is a constant 
rectangular matrix (for our problem, $U=(\rho,u^t)^t$, $C=(\mathrm{e_1},\mathrm{e_{d+1}})^t$. 
Obviously, by continuity we have
$$
0= CU|_{x_d=0,t=0}=CU_0|_{x_d=0},
$$
this is the compatibility condition of order $0$. By differentiation in time of $CU|_{x_d=0}=0$
and use of the pde, we obtain the compatibility condition of order $1$:
$0=C\partial_tU|_{x_d=t=0}=CF(U_0)|_{x_d=0}$, the sequence of higher order 
compatibility condition is obtained by iteration of the differentation in time and use of 
$\partial_tU=F$. \\
In our settings, where $F$ and $U_0$ depend on $\varepsilon$, there is a further manipulation : 
sorting by powers of $\varepsilon$, for each compatibility condition of a fixed order we 
obtain a hierarchy of conditions, for example if $U_0= \sum \varepsilon^kU^k_0$, the compatibility condition of order $0$ implies for any $k$, $CU_0^k|_{x_d=0}=0$, the hierarchy of 
compatibility conditions of order $1$ is then obtained by Taylor expansion of the relation 
$CF(\sum \varepsilon^kU_0^k)=0$, and so on.
\\
There exists non trivial data that satisfy the compatibility conditions at all orders, for example
 $(r_0,u_0)\in C_c^\infty(\R^{d-1}\times \R^{+*})$.\\

\subsection{A priori estimates on the half space}
In this section we derive a priori estimates for irrotational solutions of the Euler-Korteweg system in the half space in the special case of quantum hydrodynamics $K=1/\rho$:
\begin{equation}\label{bvpQHD}
 \left\{
 \begin{array}{ll}
  \partial_t\rho+\text{div}(\rho u)=0,\\
  \partial_tu+u\cdot\n u+\n g(\rho)=\varepsilon^2\n\left(\frac{\Delta \rho}{\rho}-
  \frac{|\n \rho|^2}{2\rho^2}\right),
  \\
  (\rho,u)|_{t=0}=(\rho_0,u),\\
  (\rho,u_3)|_{z=0}=(1,0).
 \end{array}(x',z)\in \R^{d-1}\times \R^+,\ t\geq 0.
\right.
\end{equation}
This leads to a major simplification, indeed the main order term for the reformulated system 
on $z=u+i\varepsilon\nabla \rho/\rho$ becomes linear :
\begin{equation}\label{QHD2}
\partial_tz+u\cdot \n z+i\n z \cdot w+\n g+i\varepsilon \Delta z=0,\
 \text{with }w=\frac{\varepsilon\nabla \rho}{\rho}.
\end{equation}
Nonetheless, the analysis of the boundary value problem is quite intricate : when carrying 
the energy method as in the full space, boundary terms coming from integration by parts must 
be tracked, moreover the problem becomes 
characteristic in the limit $\varepsilon\to 0$, this causes the energy estimates to be non 
uniform in $\varepsilon$. 
Let $x=(x',x_d)$. Regularity in the tangential variables $(t,x')$ is handled differently from the regularity 
in the normal variable $x_d$, accordingly we introduce the following functionals (abusively written as norms) : 
for $z$ defined on $[0,T]\times \R^{d-1}\times \R^+$,
\begin{eqnarray*}
 \|z(t)\|_{X^n}=\sum_{2\alpha_0+\sum_1^d \alpha_k\leq n}\|\partial^\alpha z(t)\|_{L^2},\\
 \|z(t)\|_{X^n_{\tan}}=\sum_{2\alpha_0+\sum_1^{d-1} \alpha_k\leq n}\|\partial^\alpha z(t)\|_{L^2}.
\end{eqnarray*}
\\
In the same spirit as the full space, we define the following energies : for any tangential 
multi-index $\alpha=(\alpha_0,\cdots,\alpha_{d-1})\in \N^d$,
$$
E_\alpha(t)=\frac{1}{2}\int_{\R^d_+}\rho|\partial^\alpha z|^2+g'(\rho)|\partial^\alpha 
\rho|^2dx.
$$
It will be used numerous times without mention that due to the boundary conditions, for any tangential derivative 
$$\partial^\alpha \rho|_{x_d=0}=0,\ \partial^\alpha u_d|_{x_d=0}=0.$$
\\
Our main result here is :
\begin{lemma}\label{estimbvp}
 If $(\rho,u)$ is a smooth, bounded away from $0$, solution of \eqref{bvpQHD}, then 
 for $n\in \N,$ $2n>d/2+1$,
 \begin{equation}\label{controltan}
\sum_{2\alpha_0+\alpha_1+\cdots+\alpha_{d-1}\leq 2n}\frac{d}{dt}E_\alpha(t)
\leq C(\|\n z\|_{\infty}+\|\n\rho\|_\infty)\|z\|_{X^{2n}}^2
\end{equation}
with $C=C(\|\rho\|_\infty+\|\rho\|_\infty^{-1},\|\n z\|_{\infty}+1)$  a continuous function.
\\
Moreover for $0\leq j\leq n$ there exists a continuous function $F_j$
\begin{equation}\label{controlenormal}
 \|\partial_d^{2j}z\|_{X^{2(n-j)}_{\text{tan}}}\lesssim \frac{F_j(\|u\|_{X^{2n}_{\text{tan}}})}
 {\varepsilon^{4n}}
\end{equation}

\end{lemma}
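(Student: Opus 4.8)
The plan is to establish \eqref{controltan} by a tangential energy method that mirrors the full space computation of Proposition \ref{propenergie}, specialized to $K=1/\rho$ (so that $a=\sqrt{\rho K}=1$ and the weights reduce to $\varphi=\rho$, $\psi=g'$) and to an irrotational field, and then to track carefully the boundary contributions that were absent on $\R^d$. Concretely, for each tangential multi-index $\alpha$ (time and $x'$ derivatives) with $2\alpha_0+\alpha_1+\cdots+\alpha_{d-1}\le 2n$, I would differentiate $E_\alpha$ in time, insert \eqref{QHD2} differentiated by $\partial^\alpha$, and use the continuity equation for the $\partial_t\rho$ factors, denoting by $R$ any term already bounded by the right hand side of \eqref{controltan}. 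Writing $z_\alpha=\partial^\alpha z$, $\rho_\alpha=\partial^\alpha\rho$, the commutators generated by \eqref{mainderiv} are of the form $R$, so only the principal terms must be analyzed.

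The transport term $u\cdot\n z_\alpha$ produces $\tfrac12\int\text{div}(\rho u)|z_\alpha|^2$, which cancels the $\partial_t\rho$ contribution of the energy, together with a boundary integral $\tfrac12\int_{x_d=0}\rho u_d|z_\alpha|^2$ that vanishes since $u_d|_{x_d=0}=0$; the pressure term $g'\n\rho_\alpha$ combines with the $\rho\,\text{div}\,u_\alpha$ coming from the continuity equation into $-\int\rho g'\,\text{div}(\rho_\alpha u_\alpha)$, whose boundary term carries the factor $\rho_\alpha|_{x_d=0}=\partial^\alpha\rho|_{x_d=0}=0$ (tangential derivative of $\rho\equiv1$) and hence vanishes as well. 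The crucial simplification of the irrotational case is that $\n z_\alpha\cdot w=w\cdot\n z_\alpha$ (the matrix $\n z_\alpha$ is symmetric), so that the transport by $w$ contributes exactly $\text{Im}\int\rho\sum_j w_j\,\partial_j z_\alpha\cdot\overline{z_\alpha}$, while the interior part of the dispersive term, after one integration by parts and the identity $\varepsilon\partial_j\rho=\rho w_j$, contributes the opposite quantity. These two terms cancel, which is what removes the loss of derivatives (on $\R^d$ this loss was compensated by the solenoidal part of the energy, which here is simply absent).

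What remains is therefore the single boundary term coming from the dispersion, $-\varepsilon\,\text{Im}\int_{x_d=0}\overline{z_\alpha}\cdot\partial_d z_\alpha\,dx'$ (recall $\rho|_{x_d=0}=1$), and showing that it vanishes is the main obstacle. Setting $\ell=\varepsilon\log\rho$, so that $w=\n\ell$ and $\ell|_{x_d=0}=0$, the boundary conditions give that the tangential components of $z_\alpha$ are real and equal to $\partial^\alpha u_k$ ($k<d$), while its normal component is purely imaginary, $z_{\alpha,d}|_{x_d=0}=i\,\partial_d\partial^\alpha\ell$, and $\partial^\alpha u_d|_{x_d=0}=0$. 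Computing $\text{Im}(\overline{z_\alpha}\cdot\partial_d z_\alpha)$ on $x_d=0$ and integrating by parts once in the tangential variables, all contributions assemble, up to sign, into $\int_{x_d=0}\partial_d\partial^\alpha\ell\;\partial^\alpha(\text{div}\,u)\,dx'$; since $\rho|_{x_d=0}=1$ and $u_d|_{x_d=0}=0$ force $\text{div}\,u|_{x_d=0}=0$ identically (differentiate the boundary relations in $t$ and $x'$ and use the continuity equation), its tangential derivative $\partial^\alpha(\text{div}\,u)|_{x_d=0}$ vanishes, and the boundary term disappears. Summing over $\alpha$ yields \eqref{controltan}.

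For the normal estimate \eqref{controlenormal} I would argue at fixed time, using \eqref{QHD2} as an elliptic relation in $x_d$ to trade normal derivatives for tangential and time derivatives. Solving for the highest normal derivative gives $\partial_d^2 z=\tfrac{i}{\varepsilon}\big(\partial_t z+u\cdot\n z+i\,w\cdot\n z\big)+\tfrac{i g'\rho}{\varepsilon^2}\,\text{Im}\,z-\Delta_{\tan}z$, where I used $\n g=g'\n\rho=\tfrac{g'\rho}{\varepsilon}\,\text{Im}\,z$; the dominant cost is the factor $\varepsilon^{-2}$ in front of the zeroth order term. Applying $\partial_d^{2(j-1)}$ and tangential derivatives and inducting on the normal order $j$, every term on the right hand side has strictly lower normal order (the transport terms contain at most one normal derivative through $u_d\partial_d$, $w_d\partial_d$), so the induction closes; the coupling with $\rho$ is handled by the companion relation $\partial_d\rho=\tfrac{\rho}{\varepsilon}\,\text{Im}\,z_d$, which likewise costs one power of $\varepsilon^{-1}$ per normal derivative. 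Bookkeeping the $\varepsilon$ powers, each pair of normal derivatives costs $\varepsilon^{-2}$ and the quadratic interactions of the transport terms double this rate, so that reaching $\partial_d^{2j}$ with the remaining tangential budget produces the factor $\varepsilon^{-4n}$; the nonlinear products are controlled by the Gagliardo--Nirenberg and composition estimates of Proposition \ref{propsobolev}, which produce the continuous function $F_j$ of $\|u\|_{X^{2n}_{\tan}}$. The main difficulty here is purely the combinatorial $\varepsilon$-bookkeeping together with the coupled $z$--$\rho$ recursion; no cancellation is needed.
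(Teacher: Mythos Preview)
Your treatment of the tangential estimate \eqref{controltan} is correct and in fact slightly cleaner than the paper's. The paper writes the dispersive term as $i\varepsilon\nabla\text{div}\,z$ and, after an integration by parts, obtains the boundary contribution $\text{Im}\big(\text{div}(z_\alpha)\,\overline{z_{\alpha,d}}\big)=-\text{div}(u_\alpha)\,w_{\alpha,d}$ directly; it then carries along a second boundary term coming from the interior cancellation between $(\nabla z_\alpha\cdot\nabla\rho)\overline{z_\alpha}$ and $\text{div}(z_\alpha)\nabla\rho\cdot\overline{z_\alpha}$. Your route, using $\Delta z_\alpha$ and the symmetry of $\nabla z_\alpha$ (since $z$ is a gradient), collapses the interior cancellation without that extra integration by parts, and your reduction of the remaining boundary integral to $\int_{x_d=0}\partial_d\partial^\alpha\ell\,\partial^\alpha(\text{div}\,u)$ is correct. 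Both arguments ultimately rest on the same observation, namely that the continuity equation and the boundary data force $\text{div}\,u|_{x_d=0}\equiv 0$.

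The gap is in the normal estimate \eqref{controlenormal}. Your claim that ``every term on the right hand side has strictly lower normal order, so the induction closes'' hides the real difficulty: after applying $\partial_d^{2(j-1)}$ to $u_d\partial_d z/\varepsilon$ (and likewise $w_d\partial_d z/\varepsilon$) you produce $u_d\,\partial_d^{2j-1}\partial^\alpha z/\varepsilon$, which has \emph{odd} normal order. The induction hypothesis only controls even orders $\partial_d^{2j'}z$ for $j'\le j-1$, so $\partial_d^{2j-1}z$ must be interpolated between $\partial_d^{2(j-1)}z$ and $\partial_d^{2j}z$ --- the very quantity you are trying to bound. The paper breaks this circularity by the one-dimensional interpolation $\|\partial_d^{2j-1}f\|_{L^2}\lesssim \|\partial_d^{2j-2}f\|_{L^2}^{1/2}\|\partial_d^{2j}f\|_{L^2}^{1/2}$ together with $\|u_d\|_{L^\infty_{x_d}}\lesssim \|z\|_{L^2}^{1/2}\|\partial_d z\|_{L^2}^{1/2}$, then applies Young's inequality to absorb the factor $\|\partial_d^{2j}z\|$ into the left-hand side. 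It is precisely this absorption step, not the $\varepsilon^{-2}$ zero-order term you single out, that generates the loss $\varepsilon^{-4}$ per induction step and hence the final $\varepsilon^{-4j}\le\varepsilon^{-4n}$. Your phrase ``quadratic interactions of the transport terms double this rate'' does not capture this mechanism; without the interpolation/absorption argument the induction does not close.
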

\begin{rmq}
Independently of the limit $\varepsilon\to 0$, the estimates are the first step toward a 
well-posedness result similar to the one from \cite{Benzoni1} in the full space case. 
We expect that standard existence methods from the field of 
quasi-linear hyperbolic boundary value problems (e.g. \cite{Benzoni3} chapter $11$: existence 
for the linearized system with a duality argument, then an iteration scheme) 
can be tracted to our settings, since the higher order dispersive part is linear, but 
a detailed proof is beyond the aim of this section.
\end{rmq}

\begin{proof}
We recall the reformulated equations \eqref{QHD2} :
$$
\partial_tz+u\cdot \n z+i\n z \cdot w+\n g+i\varepsilon\n \text{div}(z)=0.
$$
We perform the same computations as for Proposition \ref{propenergie}, but we have to check 
the cancellation of boundary terms. As a warm up, we prove the conservation of energy 
$$
\frac{d}{dt}\int_{\R^{d-1}\times \R^+}\rho|z|^2+G(\rho)dx=0,\text{ where }G'=g.
$$
Indeed, denoting $\mathrm{n}=-\mathrm{e_d}$ the outward normal 
\begin{eqnarray*}
\frac{d}{dt}\int_{\R^{d-1}\times \R^+}\frac{\rho|z|^2}{2}+G(\rho)dx&=&
\int_{\R^{d-1}\times \R^+} \frac{-\text{div}(\rho u)|z|^2}{2}\\
&&\hspace{5mm}+\text{Re}\bigg(\rho\big(-u\cdot \n z-i\n z\cdot w-\n g
-i\varepsilon\n\text{div}(z)\big)\overline{z}\bigg)
\\
&&\hspace{65mm}-g(\rho)\text{div}(\rho u)dx
\\
&=&\int_{\R^{d-1}}-\left(\frac{|z|^2}{2}+g\right)\rho u\cdot \mn+\text{Im}(\rho 
\text{div}(z)\overline{z}\cdot\mn)
\, dx'
\\
    && -\text{Im}\int_{\R^{d-1}\times \R^+}\rho (\n z\cdot w)\cdot \overline{z}
    -\varepsilon \text{div}(z)\n \rho \cdot \overline{z}\, dx.
\end{eqnarray*}
The first integral cancels, indeed $u\cdot \mathrm{n}=-u_d=0$, and 
$$
-\text{Im}\big(\rho\text{div}(z)\overline{z}\cdot \mathrm{n}\big)
=\rho \text{div}(w)u_d-\rho \text{div}(u)w_d=-\rho \text{div}(u)w_d,
$$ 
and we have from the equation of mass conservation on the boundary:
$$
0=(\partial_t\rho+\text{div}(\rho u))|_{x_d=0}=\text{div}(u)+u\cdot\n \rho=\text{div}(u).
$$
To cancel the second integral, we use $\rho w=\varepsilon \n \rho$, and the boundary conditions
 $u_d|_{x_d=0}=0$, $w_i|_{x_d=0}=0$, $1\leq i\leq d-1$:
\begin{eqnarray*}
\text{Im}\int_{\R^{d-1}\times \R^+}\rho (\n z\cdot w)\cdot \overline{z}
    -\varepsilon \text{div}(z)\n \rho \cdot \overline{z}\, dx
    &=&\text{Im}\,\varepsilon\int_{\R^{d-1}\times \R^+} \partial_iz_j\partial_j\rho\overline{z_i}
    -\partial_j\rho \overline{z_j}\partial_iz_i \, dx
    \\
    &=&-\text{Im}\int_{\R^{d-1}\times \R^+}\rho(z_jw_j\partial_i\overline{z_i}+\overline{z_j}
    w_j\partial_iz_i)\,dx
    \\
    &&-\text{Im}\int_{\R^{d-1}\times \R^+}z_j(\partial_i\partial_j\rho)\overline{z_i}\,dx\\
    && -\text{Im}\int_{\R^{d-1}} z\cdot w \overline{z_d}\,dx'
    \\
    &=&\int_{\R^{d-1}}-(u\cdot w)w_d+|w|^2u_d\,dx'=0.
\end{eqnarray*}
The higher order estimates are similar : if $\partial^{\alpha}=\partial_t^{\alpha_0}\partial_1^{\alpha_1} \cdots\partial_{d-1}^{\alpha_{d-1}}$ contains only tangential derivatives, we denote $f_\alpha:=\partial^\alpha f$ and we have 
$$
\partial_tz_\alpha+u\cdot\n z_\alpha+i(\n z_\alpha)\cdot w 
+\partial^\alpha \n g+i\varepsilon\n( \text{div}z_\alpha)=\mathcal{C},
$$
where $\mathcal{C}$ is a quadratic commutator term that contains derivatives of order at most 
$|\alpha|$ of $z$. \\
We differentiate $E_\alpha$, and denote $R$ a generic term which has a 
$L^2$ bound of the form $C(\|\rho\|_\infty+\|\rho^{-1}\|_\infty)(\|\n z\|_{\infty}+\|\n\rho\|_\infty)
\|(z(t)\|_{H^{2n}}^2$, as in the statement of the lemma. Thanks to Gagliardo-Nirenberg 
type inequality \eqref{mainderiv} we find
\begin{eqnarray*}
 \frac{d}{dt}E_\alpha(t)&=&
\int_{\R^d_+}\varepsilon\, \text{Im}\big((\n z_\alpha\cdot \n \rho)\overline{z_\alpha}-
 \text{div}(z_\alpha)\n \rho\cdot \overline{z_\alpha}\big)
                 -\rho (\partial^\alpha\n g)\cdot u_\alpha-g' \rho_\alpha
 \partial^\alpha\text{div}(\rho u)dx +R\\
 &&+\varepsilon \text{Im}\int_{\partial\R^d_+} \rho\text{div}(z_\alpha)\overline{z_\alpha}
 \cdot \mathrm{e}_ddx'.
\end{eqnarray*}
As for the conservation of energy, we have 
$\text{Im}\big( \text{div}(z_\alpha)\overline{z_\alpha}
 \cdot \mathrm{e}_d\big)=-\text{div}(u_\alpha)w_{\alpha,d}=0$, so after 
 integration by parts
 \begin{eqnarray*}
  \frac{d}{dt}E_\alpha(t)&=&
 \varepsilon\int_{\partial \R^d_+}u_\alpha\cdot \n \rho\, w_{\alpha,d}
 \,dx\\
 && -\int_{\R^d_+}\rho g'\n\rho_\alpha\cdot u_\alpha+g' \rho_\alpha
 \text{div}(\rho u_\alpha +u\rho_\alpha)dx +R
 \\
 &=&\int_{\partial\R^d_+}g'\rho_\alpha\rho u_{\alpha,d}+\int_{\R^d_+}g'u\cdot 
\frac{ \n (\rho_\alpha)^2}{2}dx+R
\\
&=&-\int_{\partial\R^d_+}g'u_d \frac{(\rho_\alpha)^2}{2}dx'+R=R.
 \end{eqnarray*}
This is \eqref{controltan}.\\
Now to control normal derivatives, we shall use the equation to prove inductively 
\begin{equation}\label{induction}
\forall\,j\leq n,\ 
\|\partial_d^{2j}z\|_{X^{2(n-j)}_{\text{tan}}}\lesssim \frac{F(\|u\|_{X^{2n}_{\text{tan}}})}
{\varepsilon^{4j}},
\end{equation}
where $F$ is a generic smooth function that cancels at $0$. \\
Denote $\Delta'=\sum_1^{d-1}\partial_i^2$ the tangential 
laplacian, we use noncharacteristicity : 
\begin{equation}\label{noncharac}
\partial_d^{2}z=-\Delta' z+i\frac{\partial_t z}{\varepsilon}
+\frac{i}{\varepsilon}(u\cdot\n z +i\n z \cdot w+\n g).
\end{equation}
To bound  $\|\partial_d^2z\|_{X^{2n-2}_{\text{tan}}}$ we crudely bound 
$\|-\Delta' z+i\partial_tz/\varepsilon\|_{X^{2n-2}_{\text{tan}}}\lesssim \|z(t)\|_{X^{2n}_{\text{tan}}}/\varepsilon$.
The nonlinear terms are estimated with Gagliardo-Nirenberg type inequalities, for conciseness
we focus on the worst term $u_d\partial_d\partial^\alpha z$, with $\partial^\alpha$ a 
tangential derivative of order $2n-2$. We use the following interpolation inequality 
\begin{equation}\label{interp}
\forall\,k\in \N^*,\ \|f'\|_{L^2(\R^+)}\lesssim \|f\|_{L^2(\R^+)}^{1-1/k}
\|f^{(k)}\|_{L^2(\R^+)}^{1/k}.
\end{equation}
The inequality is easy when the domain is $\R$ instead of $\R^+$, it is deduced from this case
by using extension operators. Applying this to $u_d\partial_dz$ we find for some $C,C_1>0$ and fixed
$(x',t)$:
\begin{eqnarray*}
\frac{1}{\varepsilon}\|u_d\partial_d\partial^\alpha z(x',\cdot,t)\|_{L^2(\R^d_+)}\leq \|u_d\|_\infty\|\partial^\alpha\partial_dz\|_{L^2}&\leq &
\frac{C}{\varepsilon}(\|z\|_{L^2}\|\partial_dz\|_{L^2})^{1/2}\|z\|_{L^2}^{1/2}\|\partial^\alpha\partial_d^2z\|_{L^2(\R^+)}^{1/2}
\\
&\leq& \frac{C_1}{\varepsilon}\|z\|_{L^2}^{5/4}\|\partial_d^2z\|_{X^{2n-2}_{\text{tan}}}^{3/4}
\\
&\leq& \frac{C_1^4}{4\varepsilon^4}\|z\|_{L^2}^5+ \frac{3}{4}\|\partial_d^2z\|_{X^{2n-2}_{\text{tan}}}.
\end{eqnarray*}
Thanks to Sobolev's embedding, $\|\|z\|_{L^2(\R^+)}^5\|_{L^2(\R^{d-1})}\lesssim 
\|z\|_{X^{2n}_{\text{tan}}}^5$.\\
We may now proceed to the induction : assume \eqref{induction} is true for $1\leq j\leq k-1$.
To estimate $\partial^{\alpha}\partial_d^{2k}z$, $\partial^\alpha$ a tangential derivative of 
order $2n-2k$, we use equation \eqref{noncharac}
and we focus on the estimate of $\|u_d\partial_d^{2k-1}\partial^{\alpha}z/\varepsilon\|$, 
$\partial^\alpha$ tangential of order $2n-2k$:
\begin{eqnarray*}
\frac{1}{\varepsilon} \|u_d\partial_d^{2k-1}\partial^{\alpha}z\|_{L^2(\R^+)}
&\lesssim & \frac{1}{\varepsilon}\|z\|_{L^2}^{1/2}\|\partial_dz\|_{L^2}^{1/2}\|\partial_d^{2k}\partial^\alpha z\|_2
^{1/2}\|\partial_d^{2k-2}\partial^\alpha z\|_2^{1/2},
\end{eqnarray*}
we deduce for any $C>0$
$$
\frac{1}{\varepsilon} \|u_d\partial_d^{2k-1}\partial^{\alpha}z\|_{L^2(\R^d_+)}
\leq \frac{C'}{\varepsilon^{2}}\|z\|_{L^\infty(\R^{d-1}, L^2(\R^+)}\|\partial_dz\|_{L^\infty L^2}
\|\partial_d^{2k-2}z\|_{X^{2n-2k}_{\text{tan}}(\R^d_+)}+\frac{\|\partial_d^{2k}z\|_{X^{2n-2k}_{\text{tan}}}}{C}.
$$
Note that from Sobolev's embedding and interpolation 
$$\|\partial_d z\|_{L^\infty L^2}\lesssim \|\partial_d^2z\|_{X^{2n-2}_{\text{tan}}}^{1/2}
\|z\|_{X^{2n}_{\text{tan}}}^{1/2}
\leq F(\|z\|_{X^{2n}_{\text{tan}}})/\varepsilon^2,$$
we conclude 
$$
\frac{1}{\varepsilon} \|u_d\partial_d^{2k-1}\partial^{\alpha}z\|_{L^2(\R^d_+)}
\leq \frac{F(\|z\|_{X^{2n}_{\text{tan}}})}{\varepsilon^{4k}}+
\frac{\|\partial_d^{2k}z\|_{X^{2n-2k}_{\text{tan}}}}{C}.
$$
Choosing $C$ large enough, we can absorb $\frac{\|\partial_d^{2k}z\|_{X^{2n-2k}_{\text{tan}}}}{C}$
in the left hand side and complete the induction.
\end{proof}
\paragraph{A rough estimate on the time of existence} The bounds from lemma 
\ref{estimbvp} require a $L^\infty$ bound on $\rho$ to be ``self closing'',  it is easily obtained
(on very short time scale) as follows : apply the method of characteristics to the equation of mass conservation :
for any $\alpha>0$, $|\inf_{\R^d_+} (\rho(t))-\inf \rho_0|+
|\sup_{\R^d_+} (\rho(t))-\sup \rho_0|\leq \alpha$ on a time interval $[0,T]$ such that 
$\int_0^t\|\text{div}u\|_\infty\,ds\leq \ln(1+\alpha/2)$ .
$$
\forall\,j>d/2-1,\ \|f\|_{L^{\infty}(\R^d_+)}\lesssim (\|f\|_{X^j_{\text{tan}}}
\|\partial_df\|_{X^j_{\text{tan}}})^{1/2}.
$$
Denoting $E_{2n,\text{tan}}=\sum_{2\alpha_0+\alpha_1+\cdots \alpha_{d-1}}E_\alpha$, we use lemma 
\ref{estimbvp} and Sobolev's embedding
\begin{eqnarray*}
\di \frac{d}{dt}E_{2n,\text{tan}}(t)&\lesssim &
 C(\rho\|_\infty+\|\rho^{-1}\|_\infty)(\|(\n z,\n \rho)\|_{X^{2n-2}_{\text{tan}}}
\|(\n \partial_d z,\n \partial_d\rho)\|_{X^{2n-2}_{\text{tan}}})^{1/2} \|z\|_{X^{2n}}^2,
\\
 \di \|\partial_d^{2j}z\|_{X^{2n-j}_{\text{tan}}}&\lesssim&\di \frac{1}{\varepsilon^{4j}}
 F(\|z\|_{X^{2n}_{\text{tan}}}), \ 1\leq j\leq n.
\end{eqnarray*}
hence there exists a continuous function $F_1$ such that
\begin{eqnarray*}
\di \frac{d}{dt}E_{2n,\text{tan}}(t)&\lesssim &
 C(\rho\|_\infty+\|\rho^{-1}\|_\infty)\frac{F_1(\|z\|_{X^{2n}_{\text{tan}}})}{\varepsilon^{8n+3}},
\\
 \di \|\partial_d^{2j}z\|_{X^{2n-j}_{\text{tan}}}&\lesssim&\di \frac{1}{\varepsilon^{4j}}
 F(\|z\|_{X^{2n}_{\text{tan}}}), \ 1\leq j\leq n.
\end{eqnarray*}
Of course, $\|z(t)\|_{X^{2n}_{\text{tan}}}^2\sim E_{2n,\text{tan}}(t)$ with constants depending on $\|\rho\|_\infty,\|1/\rho\|_\infty$. 
It is now clear that on a timescale $O(\varepsilon^{8n+3})$, the bounds are self-closing.
\\
This is not relevant in the limit $\varepsilon\to 0$, nevertheless for $\varepsilon=O(1)$ we recover 
an estimate similar to the $\R^d$ case.
\subsection{BKW expansion : notations}\label{defbkw}
The estimates from the previous section are only obtained on a very short time interval, with a rapid 
growth of the norm of derivatives in the normal direction. A common explanation is that 
in the limit $\varepsilon\to 0$, the boundary conditions of the formal limit problem are not compatible 
with the one of the original one. Here the limit problem is the Euler equation with 
non penetration boundary condition :
\begin{equation*}
 \left\{
 \begin{array}{ll}
  \partial_t\rho +\text{div}(\rho u)=0,\ (x,t)\in \R^d_+\times \R_t^+,\\
  \partial_tu+u\cdot \n u+\n g(\rho)=0,\ (x,t)\in \R^d_+\times \R_t^+\\
  u\cdot \mathrm{e}_d=0,\ (x,t)\in (\partial\R^d_+)\times \R_t^+.
 \end{array}
\right.
\end{equation*}
The solutions of this problem do not satisfy the boundary condition 
$\rho_{\partial\R^d_+\times \R^+_t}=1$, even if the initial data do, hence the presence of boundary layers is expected, leading to the growth of the Sobolev norms of the solution.
\\
It is therefore natural to consider of a formal expansion in 
$\varepsilon$ similarly to the full space case, but with the addition of correctors 
rapidly varying in $x_d$. As is common, we search an approximate solution $(\rho_a,u_a)$, 
with $u_a=\n \phi_a$  irrotational, in the form of a two scale expansion
$$
\left\{
\begin{array}{ll}
\di \rho_a(x,t)=1+\sum_0^N\varepsilon^n r^n(x,t)+\varepsilon^nR^n(x',x_d/\varepsilon,t),\\
\di \phi_a(x,t)=\sum_0^N\varepsilon^n(\phi^n(x,t)+\Phi^n(x',x_d/\varepsilon,t)).
\end{array}\right.
$$
We shall denote $\rho^0=1+r^0$.\\
The terms $(R^n,\Phi^n)$ are the so-called boundary layer terms, they will belong to the set 
$\mathcal{E}_T$, we recall its definition:
$$
\mathcal{E}_T=\{F\in H^\infty(\R^d_+\times [0,T]), \exists\,\gamma>0:\ \forall\,j\in \N,
e^{\gamma x_d}\partial_d^jF\in L^\infty(\R_{x_d}^+,H^\infty(\R^{d-1}\times [0,T]))\}.
$$
The terms $(r^n,\phi^n)$ are the interior terms. 
Since we work with the potential $\phi^n$, it is convenient to introduce the integrated version of 
\eqref{EK}
\begin{equation}\label{EKpot}
 \left\{
 \begin{array}{ll}
\di  \partial_t\rho +\text{div}(\rho \n\phi)=0,\ (x,t)\in \R^d_+\times \R_t^+,\\
\di   \partial_t\phi+|\n\phi|^2/2+g(\rho)=\varepsilon^2\left(\frac{\Delta \rho}{\rho}
-\frac{1}{2\rho^2}|\n\rho|^2\right),
  \ (x,t)\in \R^d_+\times \R_t^+.
  \end{array}
\right.
\end{equation}
In the following, we denote $\ul{f}=f|_{x_d=0}$.
\subsection{The cascade of equations for the BKW expansion}
The usual way to obtain equations for $(R^n,\Phi^n),\ (r^n,\phi^n)$ is to split the analysis between 
$x_d$ large with respect to $\varepsilon$, where the boundary layer terms are neglected and we 
have to solve evolutionary equations on $(r^n,\phi^n)$, and conversely for $x_d$ small 
we obtain ordinary differential equations on the correctors $(R^n,\Phi^n)$.
An important tool is the following observation (see Grenier-Gu\`es \cite{GrGu}) :  
for $\varphi$ smooth, 
$(a,B)\in H^\infty\times \mathcal{E}$
\begin{equation}\label{splitinnerouter}
f(a(x)+B(x',x_d/\varepsilon))
=f(a(x))+f(a(x',0)+B(x',x_d/\varepsilon)+\varepsilon R,\ R\in \mathcal{E}_T.
\end{equation}
Inserting the ansatz in \eqref{EK}, and sorting by powers in $\varepsilon$, it is readily seen that 
$\Phi^0=0$, indeed the power $\varepsilon^{-2}$ in the (integrated) momentum equation gives 
\begin{eqnarray*}
(\partial_d\Phi^0)^2=0\Rightarrow \Phi^0=0.
\end{eqnarray*}
The first equations for the inner domain are 
\begin{equation}\label{inner0}
\left\{
\begin{array}{ll}
 \partial_tr^0+\text{div}(r^0\n \phi^0)=0,\\
 \partial_t\phi^0+\frac{1}{2}|\n \phi^0|^2+g(r^0)=0,\\
 \partial_d\phi^0|_{x_d=0}=0.
\end{array}
\right.
\end{equation}
The next equations for the boundary layer are obtained using $\ul{\partial_d\phi^0}=0$
\begin{eqnarray*}
\left\{
\begin{array}{ll}
 \di (1+\ul{r^0}+R^0)\partial_d^2\Phi^1+(\partial_d\Phi^1+\ul{\partial_d\phi^0})\partial_dR^0=0,\\
\di \frac{\partial_d^2 R^0}{1+R^0+\ul{r^0}}-\frac{1}{2}\frac{(1+\partial_dR^0)^2}{(R^0+\ul{r^0})^2}=
\partial_d\Phi^1\ul{\partial_d\phi^0}+\frac{1}{2}(\partial_d\Phi^1)^2+g(1+R^0+\ul{r^0}),\\
R^0|_{x_d=0}+\ul{r^0}=0.
\end{array}
\right.
\end{eqnarray*}
\begin{equation}\label{layer0}
\Leftrightarrow 
\left\{
\begin{array}{ll}
 \di (1+\ul{r^0}+R^0)\partial_d^2\Phi^1+(\partial_d\Phi^1)\partial_dR^0=0,\\
\di \frac{\partial_d^2 R^0}{1+R^0+\ul{r^0}}-\frac{1}{2}\frac{(\partial_dR^0)^2}{(1+R^0+\ul{r^0})^2}=
\frac{1}{2}(\partial_d\Phi^1)^2+g(1+R^0+\ul{r^0}),\\
R^0|_{x_d=0}+\ul{r^0}=0,
\end{array}
\right.
\end{equation}
Similarly to the full space case, the higher order equations for the interior terms are
\begin{equation}\label{innerk}
 \forall\,k\geq 1,\ 
 \left\{
 \begin{array}{ll}
  \partial_tr^k+\text{div}(\rho^0\n \phi^k++r^k\n \phi^0)=f_1^k,\\
  \partial_t\phi^k+\n \phi^0\cdot \n \phi^k+g'(\rho^0)r^k=f_2^k,\\
  \partial_d\phi^k|_{x_d=0}+\partial_d\Phi^{k+1}|_{x_d=0}=0.
 \end{array}
\right.
\end{equation}
where $f_1^k,f_2^k$ only depend on $(\n \phi^j,r^j)_{j\leq k-1})$ and their derivatives.
\\
The higher order boundary layer equations are 
\begin{equation}\label{layerk}
 \left\{
 \begin{array}{ll}
  \partial_d((R^0+\ul{\rho^0})\partial_d\Phi^{k+2})=F^k_1,\text{ mass, order }\varepsilon^{k},\\
\di  \frac{\partial_d^2(R^k)}{\ul{\rho^0}+R^0}-\frac{\partial_dR^k\partial_dR^0}{(\ul{\rho^0}+R^0)^2}
=g'(\ul{\rho^0}+R^0)R^k+F_2^k,\text{ momentum, order }\varepsilon^{k},
\\
\di R^k|_{x_d=0}=\ul{r^k}.
 \end{array}
\right.
\end{equation}
where $F^1_k$, respectively $F^2_k$, depends on $(\Phi^j)_{j\leq k+1},(R^j,\ul{r^j},\ul{\varphi^j})_{j\leq k}$, 
respectively $(R^j)_{j\leq k-1},(\Phi^j,\ul{r^j},\,\ul{\varphi^j})_{j\leq k}$, and are exponentially 
fast decaying. We underline here that it is important for solvability that $F_1^k$ does not depend 
on $R^{k+1}$, this is due to the fact that $\Phi^0=\Phi^1=\ul{\partial_d\varphi^0}=0$.

\subsection{Solvability of the BKW expansion}
The order in which we solve the equations is as follows
\begin{equation*}
\Phi^{k+1}\to (\varphi^{k},r^{k})\to R^{k}\to \Phi^{k+2}\cdots
\end{equation*}
The existence of the expansion will be a consequence of the following three lemmas :
\begin{lemma}[Inner expansion]\label{solvhyp}
For smooth initial data $(\rho^0_0,\varphi^0_0)\in (1+H^\infty)\times H^\infty$ that satisfy 
the compatibility conditions, there exists a time 
$T>0$ and a unique smooth solution such that $(\rho^0-1,\n\varphi^0)\in C([0,T],H^\infty)$
to \eqref{inner0}.
\\
For such $(r^0,\varphi^0)$, $T$, and $(r_0,\varphi_0)$ that satisfy the compatibility 
conditions, the linear problem 
\begin{equation}\label{lineulerbvp}
 \forall\,k\geq 1,\ 
 \left\{
 \begin{array}{ll}
  \partial_tr+\text{div}(\rho^0\n \phi+r\n \phi^0)=f_1\in H^\infty([0,T]\times \R^d_+),\\
  \partial_t\phi+\n \phi^0\cdot \n \phi+g'(\rho^0)r=f_2\in H^\infty([0,T]\times \R^d_+),\\
  \partial_d\phi|_{x_d=0}=b(x',t)\in H^\infty ([0,T]\times \R^{d-1}),\\
  (\varphi,r)|_{t=0}=(\varphi_0,r_0)\in H^\infty(\R^d_+),
 \end{array}
\right.
\end{equation}
has a unique solution with $(\n\phi,r)H^\infty([0,T]\times \R^d_+)$.
\end{lemma}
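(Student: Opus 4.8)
The plan is to exploit irrotationality in order to replace both the nonlinear system \eqref{inner0} and the linear system \eqref{lineulerbvp} by a single scalar second order hyperbolic equation for the potential $\phi$, for which the non penetration condition $\partial_d\phi|_{x_d=0}=0$ becomes a genuine \emph{non characteristic} Neumann condition. This is the key structural observation: although the first order formulation is characteristic at the boundary -- which is precisely what forces the loss of normal derivatives quantified in \eqref{controlenormal} of Lemma \ref{estimbvp} -- the scalar reformulation is non characteristic, so that full $H^\infty$ regularity can be recovered by classical means. (Alternatively one could invoke directly the theory of characteristic symmetric hyperbolic initial--boundary value problems of \cite{Benzoni3}, but the scalar reduction yields full regularity more transparently.)

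For the nonlinear problem I would first note that \eqref{inner0} is the irrotational compressible Euler system in Bernoulli form with non penetration condition $u\cdot e_d=\partial_d\phi^0|_{x_d=0}=0$. Using the second (Bernoulli) equation to express $\rho^0$ as a smooth function of $(\partial_t\phi^0,\nabla\phi^0)$ and inserting it into the mass equation yields a quasilinear wave type equation $\partial_t^2\phi^0=\text{div}\big(c(\,\cdot\,)\nabla\phi^0\big)+\text{(lower order)}$ with smooth coefficients depending on $(\partial_t\phi^0,\nabla\phi^0)$, supplemented by the homogeneous Neumann condition $\partial_d\phi^0|_{x_d=0}=0$. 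Local well posedness in $C([0,T],H^\infty)$ for such an equation is classical: energy estimates for the linearized operator, a fixed point/iteration scheme, and propagation of the compatibility conditions at all orders assumed in the statement. The density $\rho^0$ is then recovered algebraically from the Bernoulli relation, and uniqueness follows from an energy estimate on the difference of two solutions.

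For the linear problem \eqref{lineulerbvp}, I would first remove the inhomogeneous boundary datum by subtracting a smooth lifting, e.g.\ $x_d\,\eta(x_d)\,b(x',t)$ with $\eta$ a cut off equal to $1$ near $x_d=0$, which reduces to a homogeneous Neumann condition at the cost of modifying $(f_1,f_2)$ by smooth, compatible source terms. Solving the second (algebraic in $r$) equation for $r=\big(f_2-\partial_t\phi-\nabla\phi^0\cdot\nabla\phi\big)/g'(\rho^0)$ and inserting it into the first equation again produces a \emph{linear} second order hyperbolic equation for $\phi$, with smooth coefficients built from the background $(\rho^0,\phi^0)$ of the first part, and homogeneous Neumann condition. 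Existence, uniqueness and $H^\infty$ regularity then follow from the standard theory of linear second order hyperbolic equations with Neumann data (Lions--Magenes type energy estimates together with a Galerkin or duality argument, controlling tangential and -- thanks to non characteristicity -- normal derivatives, and using the compatibility conditions for regularity up to $t=0$). The variable $r$ is recovered from the algebraic relation above, and one checks $(\nabla\phi,r)\in H^\infty$.

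The main obstacle is conceptual rather than computational: the crux is to justify the scalar reduction itself, namely that $g'(\rho^0)$ stays bounded away from zero -- guaranteed on $[0,T]$ by the first part -- so that the second equation can be inverted and the resulting scalar operator is uniformly hyperbolic with $x_d=0$ non characteristic, and that the compatibility conditions of the original first order system translate exactly into the Neumann compatibility conditions required for $H^\infty$ smoothness of the wave equation. Once this bookkeeping is in place, the non characteristic scalar problem delivers the full normal regularity that the characteristic first order formulation cannot provide.
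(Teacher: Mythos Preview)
Your approach is correct and genuinely different from the paper's. The paper stays with the first-order formulation: for the nonlinear part it sets $u^0=\nabla\varphi^0$, recognizes the compressible Euler system with non-penetration condition, and invokes Schochet's local well-posedness result for that \emph{characteristic} boundary value problem, recovering $\varphi^0$ afterwards by time-integrating the Bernoulli relation; for the linear part it identifies \eqref{lineulerbvp} as a maximal dissipative hyperbolic problem (Rauch) and obtains smoothness via Schochet's technique, trading tangential for normal regularity on $(\partial_d u_d,\partial_d r)$ from the equations and handling the tangential velocity components through $\text{curl}(u)$. Your scalar reduction sidesteps the characteristic difficulty altogether: eliminating $\rho$ (resp.\ $r$) through the Bernoulli relation yields a quasilinear (resp.\ linear) wave equation for $\phi$ with speed $c^2=\rho^0 g'(\rho^0)>0$, for which $\{x_d=0\}$ is non-characteristic and the Neumann condition is classical, so normal regularity comes for free. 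What you gain is transparency and independence from the specialized literature on characteristic symmetric hyperbolic IBVPs; what you pay is the reliance on irrotationality (harmless here, since it is assumed throughout Section~\ref{sechalfspace}) and some bookkeeping to check that the compatibility conditions transfer correctly to the second-order problem. One small quibble: your pointer to \eqref{controlenormal} is slightly off, since the $\varepsilon^{-4j}$ loss there measures the degeneration of non-characteristicity of the \emph{dispersive} problem as $\varepsilon\to 0$, rather than the characteristic nature of the limiting Euler system itself---but the underlying intuition is the right one.
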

\begin{proof}
 Define $u^0=\n \varphi^0$, and take the gradient of the second equation. The new system is the 
 Euler equations with non penetration boundary condition. The existence of a smooth solution 
 is due to Schochet \cite{Schochet}. Then we obtain $\varphi^0$ simply with the formula 
 $$\varphi^0=\varphi^0(t=0)+\int_0^t|u^0|^2/2+g(\rho)ds.$$
 The system \eqref{lineulerbvp} is a hyperbolic maximal dissipative problem, the general 
 solvability can be found in \cite{Rauchsymm}, as for the smoothness of solution the method
 of proof of Schochet\footnote{The problem is characteristic, but near the boundary one can trade 
 tangential regularity to estimate $\partial_du_d,\partial_dr$, then the regularity of 
 $(u_j)_{1\leq j<d-1}$ is obtained by considering $\text{curl}(u)$, which also satisfies a dissipative 
 hyperbolic system.} works also in this case.
\end{proof}

\begin{lemma}[Boundary layer, first order]\label{bd0}
There exists $T>0$ such that
the system \eqref{layer0} has a unique solution 
 $$
 \Phi^1=0,\ R^0\in \mathcal{E}_T.
 $$
\end{lemma}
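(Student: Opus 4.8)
The plan is to exploit that, although \eqref{layer0} looks like a coupled system for $(\Phi^1,R^0)$, it in fact decouples: the first (mass) equation forces $\Phi^1=0$ on its own, and the second (momentum) equation then reduces to a scalar autonomous second order ODE for $R^0$ solved by a phase-plane argument. Throughout I would read \eqref{layer0} as a family of ODEs in the fast normal variable (with $(x',t)$ frozen as parameters), setting $\rho:=1+\ul{r^0}+R^0$, so that $\partial_d R^0=\partial_d\rho$, $\rho>0$, and $\rho\to\ul{\rho^0}:=1+\ul{r^0}$ at $+\infty$. First I would dispatch $\Phi^1$: the first line of \eqref{layer0} is exactly $\partial_d(\rho\,\partial_d\Phi^1)=0$, a linear homogeneous equation whose solutions satisfy $\partial_d\Phi^1=c(x',t)/\rho$. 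Since membership in $\mathcal{E}_T$ forces $\partial_d\Phi^1\to 0$ while $\rho\to\ul{\rho^0}>0$, the constant $c$ vanishes, hence $\partial_d\Phi^1\equiv 0$ and, with $\Phi^1\to 0$, $\Phi^1=0$. This uses nothing about $R^0$, so the system genuinely decouples.

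With $\Phi^1=0$ I would linearise the dispersive part by the Madelung-type substitution $b:=\sqrt{\rho}$, using the elementary identity
$$\frac{\partial_d^2\rho}{\rho}-\frac12\frac{(\partial_d\rho)^2}{\rho^2}=2\frac{\partial_d^2 b}{b},$$
which turns the second line of \eqref{layer0} into the autonomous ODE $2\,\partial_d^2 b=b\,(g(b^2)-g(\ul{\rho^0}))$, the constant $g(\ul{\rho^0})$ being the Bernoulli constant inherited from the interior equation \eqref{inner0} evaluated at $x_d=0$ (it is precisely what makes the far field $b\equiv b_\infty:=\sqrt{\ul{\rho^0}}$ a rest state). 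The boundary data become $b(0)=1$, because $R^0|_{x_d=0}=-\ul{r^0}$ gives $\rho(0)=1$, together with $b\to b_\infty$ at $+\infty$, the latter required to be exponential.

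The heart of the argument is then a one–dimensional phase-plane analysis. Multiplying by $\partial_d b$ and integrating yields the first integral $(\partial_d b)^2=\tfrac12\Gamma(b^2)$ with $\Gamma(u):=G(u)-G(\ul{\rho^0})-g(\ul{\rho^0})(u-\ul{\rho^0})$, $G'=g$, the integration constant being pinned to $0$ by $\partial_d b,\Gamma\to 0$ at $+\infty$. Because $g'>0$ on the density range, a standing hypothesis valid on the whole interval between $1$ and $\ul{\rho^0}$ after possibly shrinking $T$, the potential $G$ is strictly convex there, so $\Gamma$, the gap between $G$ and its tangent at $\ul{\rho^0}$, is strictly positive except for a nondegenerate zero at $\ul{\rho^0}$ (indeed $\Gamma''(\ul{\rho^0})=g'(\ul{\rho^0})>0$). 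Thus $\partial_d b=\pm\sqrt{\tfrac12\Gamma(b^2)}$, the sign being fixed by that of $\ul{\rho^0}-1$, is a separable equation with Lipschitz right-hand side that does not vanish strictly between $1$ and $b_\infty$; it admits a unique, strictly monotone solution issued from $b(0)=1$ that reaches $b_\infty$ only as $x_d\to+\infty$. This is exactly the stable-manifold orbit of the hyperbolic rest point $(b_\infty,0)$, which also gives uniqueness.

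Finally, linearising $2\,\partial_d^2 b=b\,(g(b^2)-g(\ul{\rho^0}))$ at $b_\infty$ gives $\partial_d^2(b-b_\infty)=\ul{\rho^0}\,g'(\ul{\rho^0})(b-b_\infty)+O((b-b_\infty)^2)$, so $b-b_\infty$ and all its normal derivatives decay like $e^{-\kappa x_d}$ with $\kappa=\sqrt{\ul{\rho^0}\,g'(\ul{\rho^0})}$ bounded below uniformly on $[0,T]$; hence $R^0=b^2-\ul{\rho^0}$ decays exponentially together with all its derivatives. Smooth dependence of the stable manifold on the parameters $(x',t)$, which enter through $\ul{r^0}(x',t)$ in both the coefficient and the boundary value, then upgrades this to $R^0\in\mathcal{E}_T$. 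I expect the main obstacle to be precisely this phase-plane/decay step: one must exclude spurious turning points of the first integral between $1$ and $b_\infty$ (guaranteed by strict convexity of $G$, i.e. $g'>0$ on the whole density interval) and secure a uniform exponential rate (via $g'(\ul{\rho^0})>0$), which is what converts mere convergence of the orbit into genuine membership in $\mathcal{E}_T$.
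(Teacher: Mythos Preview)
Your proof is correct and follows essentially the same route as the paper: integrate the first equation to force $\Phi^1=0$, then use the Madelung substitution $b=\sqrt{\ul{\rho^0}+R^0}$ to reduce the momentum equation to an autonomous second-order ODE with a hyperbolic rest point at $b_\infty=\sqrt{\ul{\rho^0}}$, and conclude by stable-manifold/exponential-decay arguments. The only difference is that where the paper simply invokes ``standard ODE arguments'' for the existence of the exponentially decaying profile, you spell out the phase-plane analysis via the first integral $(\partial_d b)^2=\tfrac12\Gamma(b^2)$ and the strict convexity of $G$; this is a welcome elaboration rather than a different method.
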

\begin{proof}
By integration of the first equation and decay at infinity, 
$(R^0+\ul{r^0})\partial_d\Phi^1=0$, hence $\Phi^1=0$ provided $R^0+\ul{r^0}\neq 0$. 
\\
The second equation rewrites 
$$
\frac{1}{\sqrt{\ul{\rho^0}+R^0}}\partial_d^2\sqrt{\ul{\rho^0}+R^0}=g(\ul{r^0}+R^0)+\ul{\partial_t\varphi^0}
+|\ul{\n\varphi^0})|^2/2=g(\ul{\rho^0}+R^0)-g(\ul{\rho^0}).
$$
Setting $A^0=\sqrt{\ul{\rho^0}+R^0}$, this rewrites 
$$
\partial_d^2A^0=A^0\big(g((A^0)^2)-g(\ul{\rho^0})\big):=f(A^0),\text{ with } 
A^0|_{x_d=0}=\ul{\rho^0}+R^0|_{x_d=0}=1.
$$
Note that $f'(\sqrt{\ul{\rho^0}})=2\ul{\rho^0}g'(\ul{\rho^0})>0$
if $\ul{\rho^0}(x',t)=1+\ul{r^0}$ is close enough to $1$, so standard ODE arguments ensure 
for any $x',t$ the existence of $A^0$ converging exponentially fast to $0$ with $(A^0)^2(x',0,t)=1$. By continuity of $\rho^0$ and the compatibility condition
$\ul{\rho^0}(x',0,0)=1$, this is true on some time interval $[0,T]$, $T$ small enough.
\end{proof}
\begin{lemma}\label{bdlk}
For $F_1,F_2$ in $\mathcal{E}_T$, the problems
\begin{equation*}
 \partial_d((R^0+\ul{\rho^0})\partial_d\Phi)=F_1,\\
\end{equation*}
and
\begin{equation*}
\left\{\begin{array}{ll}
 \di  \frac{\partial_d^2(R)}{\ul{\rho^0}+R^0}-\frac{\partial_dR\partial_dR^0}{(\ul{\rho^0}+R^0)^2}
=g'(\ul{\rho^0}+R^0)R+F_2,
\\
\di R|_{x_d=0}+\ul{r}=0.
 \end{array}
\right.
\end{equation*}
have  unique smooth solutions $R,\Phi$ in $\mathcal{E}_T$.
\end{lemma}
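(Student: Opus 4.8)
The plan is to treat both identities as linear second-order ODEs in the normal variable $x_d$, with $(x',t)$ entering only as smooth parameters. Throughout I write $\sigma:=\ul{\rho^0}+R^0$; by Lemma \ref{bd0} this coefficient is smooth, bounded away from $0$, and satisfies $\sigma-\ul{\rho^0}=R^0\in\mathcal{E}_T$, so $\sigma\to\ul{\rho^0}$ exponentially fast as $x_d\to\infty$. For the first equation $\partial_d(\sigma\,\partial_d\Phi)=F_1$, I would simply integrate twice, fixing the two constants of integration by the requirement that $\Phi\in\mathcal{E}_T$. A first integration gives $\sigma\,\partial_d\Phi(x_d)=-\int_{x_d}^{\infty}F_1\,dy$ (the lower constant is forced to $-\int_0^\infty F_1$ so that $\partial_d\Phi$ decays), and a second integration with vanishing limit at $+\infty$ yields the explicit formula $\Phi(x_d)=\int_{x_d}^{\infty}\sigma(y)^{-1}\int_{y}^{\infty}F_1(z)\,dz\,dy$. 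Since $F_1\in\mathcal{E}_T$ decays exponentially and $1/\sigma$ is bounded, $\Phi$ is smooth and exponentially decaying, hence in $\mathcal{E}_T$, and the decay requirement makes it unique.

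The second equation is the substantive one. The key preliminary observation is that its left-hand side is a perfect derivative: since $\ul{\rho^0}$ does not depend on $x_d$, one has $\partial_d\sigma=\partial_dR^0$ and
\[
\frac{\partial_d^2R}{\sigma}-\frac{\partial_dR\,\partial_dR^0}{\sigma^2}=\partial_d\!\left(\frac{\partial_dR}{\sigma}\right),
\]
so the problem reads $\mathcal{L}R:=\partial_d(\sigma^{-1}\partial_dR)-g'(\sigma)R=F_2$, a Sturm--Liouville operator, with boundary condition $R(0)=-\ul{r}$ and the implicit requirement $R\in\mathcal{E}_T$. I would first remove the inhomogeneous boundary data by subtracting a fixed exponentially decaying lift, e.g. $R=\widetilde R-\ul{r}\,e^{-x_d}$, reducing to $\mathcal{L}\widetilde R=\widetilde F_2$ with $\widetilde F_2\in\mathcal{E}_T$, $\widetilde R(0)=0$, and decay at infinity. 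Existence and uniqueness of a finite-energy solution then follow from Lax--Milgram applied to the bilinear form $a(R,S)=\int_0^\infty\sigma^{-1}\partial_dR\,\partial_dS+g'(\sigma)RS\,dx_d$ on $\{R\in H^1(\R^+):R(0)=0\}$.

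Coercivity of $a$ is exactly where the main difficulty lies, and it forces the sign condition $g'(\sigma)\geq c>0$ along the \emph{entire} profile, not merely at infinity. I would secure this by shrinking $T$: at $x_d=0$ we have $\sigma=\ul{\rho^0}+R^0(0)=1$ (from $R^0(0)=-\ul{r^0}$), and for short time $\ul{r^0}$ stays small, so $\sigma$ remains in a neighbourhood of $1$ where $g'>0$, exactly as in Lemma \ref{bd0}; combined with $\sigma$ bounded away from $0$ this makes $a$ coercive on $H^1$. The remaining points are routine: smoothness of $\widetilde R$ in $x_d$ comes from bootstrapping the ODE $\partial_d^2\widetilde R=\sigma\,g'(\sigma)\widetilde R+\cdots$ once $\widetilde R\in H^1$ is known, and smooth dependence on the parameters $(x',t)$ follows by differentiating the equation and reapplying the coercive estimate, using the smooth dependence of $\sigma$ and $F_2$. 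For exponential decay I would use that $\mathcal{L}$ is asymptotically autonomous: as $x_d\to\infty$ it converges exponentially to $\frac{1}{\ul{\rho^0}}\partial_d^2-g'(\ul{\rho^0})$, whose characteristic rate $\mu=\sqrt{\ul{\rho^0}g'(\ul{\rho^0})}>0$ is real and positive; a comparison argument against $e^{-\mu x_d/2}$ (or the associated Green's function) then upgrades the $H^1$ decay to full exponential decay with a uniform rate $\gamma$, placing $\widetilde R$, and hence $R$, in $\mathcal{E}_T$. The hard part is thus the coupling of coercivity (requiring the profile to remain in the region $g'>0$) with the quantitative exponential decay needed for membership in $\mathcal{E}_T$.
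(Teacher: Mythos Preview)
Your proposal is correct and follows the same overall strategy as the paper: integrate twice for $\Phi$, rewrite the $R$-equation in Sturm--Liouville form $\partial_d(\sigma^{-1}\partial_dR)=g'(\sigma)R+F_2$, and invoke Lax--Milgram for existence. The one methodological difference is in how exponential decay is obtained. You first produce an $H^1$ solution (after lifting the boundary datum) and then upgrade to exponential decay by a separate asymptotic/comparison argument near $x_d=\infty$; the paper instead builds the decay into the variational setup from the start via the change of unknown $\tilde X=e^{\alpha x_d}X$ for $\alpha>0$ small, so that an $H^1$ solution for $\tilde X$ is automatically an exponentially decaying $X$. Your route is more explicit about the mechanism (and about smoothness in the parameters $(x',t)$, which the paper leaves entirely implicit), while the paper's weighted substitution is shorter and dispenses with a separate decay step.
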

\begin{proof}
From the first equation we get
$$
\partial_d\Phi^k(x',x_d,t)=(R^0+\ul{\rho^0})^{-1}\int_\infty^{x_d}F_1(x',r,t)dr.
$$
Since the right hand side belongs to $\mathcal{E}_T$, another integration gives the unique 
solution in $\mathcal{E}_T$. Note that the decay at infinity of $\Phi$ does not allow to 
prescribe its value at $x_d=0$.
\\
Now $R$ satisfies :
$$
\left\{
\begin{array}{ll}
\di \partial_d\left(\frac{\partial_d R}{\ul{\rho^0}+R^0}\right)=g'(\ul{\rho^0}+R^0)R+F_2,
\\
R|_{x_d=0}+\ul{r}=1.
\end{array}
\right.
$$
This is a boundary value problem of the form $(aX')'(s)=bX(s)+F(s)$, with $a,b>0$, and $F$ exponentially 
decaying. The existence  of an 
exponentially decaying solution is a direct consequence of the change of unknown $\tilde{X}=
e^{\alpha s}X$ -with some $\alpha$ small enough-
and an application of  Lax-Milgram theorem.
\end{proof}
To summarize, from $(\Phi^j)_{j\leq k}$, $(R^j,\varphi^j,r^j)_{j\leq k-1}$, we obtain 
$\Phi^{k+1}$  by solving the first ODE in \eqref{layerk} (lemma \ref{bdlk}). Given $\Phi^{k+1}$ 
the boundary condition in \eqref{innerk} is 
well-defined so we get $(\varphi^{k},r^{k})$ with lemma \ref{solvhyp}, then from $r^{k+1}$ 
we get the boundary condition to compute $R^{k+1}$ in \eqref{layerk}, using again lemma \ref{bdlk}.

\subsection{Comparison with other boundary conditions}\label{discussion}
Previous works adressed (for the nonlinear Schr\"odinger equation) the case of Dirichlet 
boundary conditions $\varphi|_{x_d=0}=0,\rho|_{x_d=0}=1$ (Gui-Zhang \cite{GuiZhang}),
and the case of Neumann boundary conditions $\varphi|_{x_d=0}=0,\rho|_{x_d=0}=1$ (Chiron-Rousset \cite{chironrousset}). Following this terminology, we label the boundary conditions considered here 
as ``mixed Dirichlet-Neumann''
The hierarchy of corrector terms is as follows :
\begin{enumerate}
 \item Dirichlet : $R^0\neq 0,\ \Phi^1\neq 0$. Existence of the BKW expansion at any order 
 for small smooth data. 
 \item Neumann : $R^0=\Phi^1=\Phi^2=0$, $R^1\neq 0,\ \Phi^3\neq 0$. Existence of the BKW 
 expansion at any order for smooth data.
 \item Mixed boundary conditions : $\Phi^1=0$, $R^0\neq 0$, $\Phi^2\neq 0$. Existence of the BKW 
 expansion at any order for smooth data.
\end{enumerate}
The justification that $\Phi^2\neq 0$ in our case is merely a computation :  the equation satisfied by $\Phi^2$ is 
$$
\partial_d\left((\ul{\rho^0}+R^0(x_d))\partial_d\Phi^2(x_d))\right)=-\partial_d\left(R^0(x_d)
x_d\ul{\partial_d^2\varphi}\right)=F_1^2,
$$
and  $F_1^2$ is not zero.
\\
Hence the first boundary layer term for the velocity is small for the mixed boundary conditions, but not as 
small as in the Neumann case. 
While the mixed boundary conditions seem to lie in between Dirichlet and Neumann in term of the strength 
of the boundary layers, it has similar difficulty to Dirichlet since it contains $R^0$ as a $O(1)$
boundary layer term. More importantly, the boundary conditions have no simple expression in 
the Schr\"odinger formulation. The use of the Schr\"odinger formulation is a key point for the convergence analysis in both 
\cite{GuiZhang} and \cite{chironrousset}, and this is what prevents us so far from proving 
the convergence of the BKW expansion to the exact solution.

\bibliographystyle{plain}
\bibliography{biblio}
\end{document}